\documentclass[journal,twocolumn,10pt,letter]{IEEEtran}

\IEEEoverridecommandlockouts
\ifCLASSINFOpdf
\else
\fi

\usepackage{bbold}
\usepackage{amsfonts}
\usepackage{nonfloat}
\usepackage{amssymb}
\usepackage[cmex10]{amsmath}
\usepackage{amsthm}
\usepackage{amssymb}
\usepackage{mathrsfs}
\usepackage{amsbsy}
\usepackage{setspace}
\usepackage{graphicx}
\usepackage{newclude}
\usepackage{latexsym}
\usepackage{lettrine} 

\usepackage{flushend}

\usepackage{authblk}
\usepackage{subfigure}

\usepackage{scalerel,stackengine}
\stackMath

\usepackage{amsmath}

\usepackage{enumitem}

\usepackage{amsthm}

\newtheorem{theorem}{Theorem}

\newtheoremstyle{named}{}{}{\itshape}{}{\bfseries}{.}{.5em}{\thmnote{#3's }#1}
\theoremstyle{named}

\usepackage{amsthm}

\theoremstyle{definition}

\usepackage[table]{xcolor}
\usepackage{multirow}
\usepackage{rotating}
\usepackage{booktabs}
\usepackage{bbm} 
\usepackage{amsmath,epsfig,cite,amsfonts,psfrag}
\usepackage{lipsum}
\usepackage{cuted}
\usepackage{color}

\usepackage{epstopdf}
\usepackage{float}
\usepackage{mathtools}
\usepackage{bbm}
\usepackage{atbegshi}
\usepackage{balance}
\usepackage[utf8]{inputenc}
\usepackage{multirow}

\renewcommand{\arraystretch}{1.2}

\newtheorem{corollary}[theorem]{Corollary}

\usepackage{accents}
\newlength{\dhatheight}

\allowdisplaybreaks

\graphicspath{{figures/}}
\allowdisplaybreaks

\makeatletter
\floatstyle{plain}
\newfloat{twocolequfloat}{b}{zzz}
\floatname{twocolequfloat}{Equation}
\newtheorem{Theorem}{Theorem}

\makeatother

\begin{document}
%







\title{\fontsize{24pt}{29pt}\selectfont Communication Security via Temporal Dependency}




%
%
%

\author{Mohsen~Abedi,
        Ahmed~Badawy, and Amr~Mohamed
\thanks{M. Abedi, A. Badawy, and A. Mohamed are with the Dept. Computer Engineering, Qatar University, Doha, Qatar. Email: \{mohsen.abedi;\,badawy;\,amrm\}@qu.edu.qa.} \vspace{-0.4cm}
}

\maketitle

\thispagestyle{empty}  


\begin{abstract}
Communication security has traditionally been built upon one of two external resources: shared secret keys or a communication advantage over the eavesdropper. However, many practical wireless scenarios, including infrastructure-less, emergency, and highly dynamic networks, cannot guarantee either resource, motivating the need for a new communication-security principle. This paper introduces a new communication-security paradigm that exploits temporal dependency as a security resource. Unlike conventional secrecy techniques that prevent an eavesdropper from recovering transmitted bits, the proposed paradigm allows packet decoding but prevents correct interpretation by making original and dummy packets computationally indistinguishable. Rather than protecting individual transmissions, successive transmissions are intentionally coupled so that future communication depends on correctly interpreting previous ones. As one realization, we develop a state-chained random linear network coding (RLNC) framework in which the synchronization state required to interpret each transmission block is embedded in the previous block. Therefore, synchronization failures propagate across future transmissions, resulting in persistent eavesdropper asynchronization. We analytically characterize the probability and persistence of eavesdropper asynchronization, together with the computational complexity of resynchronization, and develop transmission strategies based on transmit-power and intentional-interference optimization. Numerical results demonstrate sub-second eavesdropper asynchronization under a worst-case adversarial model with no channel advantage, no secret assumptions, complete protocol knowledge, and an arbitrarily stronger eavesdropper.
\end{abstract}

\begin{IEEEkeywords}
Communication security; Temporal dependency; State-chained RLNC; Eavesdropping.
\end{IEEEkeywords}

%
\IEEEpeerreviewmaketitle

\section{Introduction}



Conventional cryptographic security and physical-layer security are the two dominant paradigms for protecting wireless communications. Despite their different foundations, both derive security from external resources: cryptographic security relies on shared secret keys~\cite{rivest1978method,wyner1975wire}, whereas physical-layer security exploits a communication advantage through channel asymmetry, CSI, beamforming, artificial noise, cooperative jamming, and related techniques~\cite{loyka2022secrecy,niu2025survey,luo2022secure,zadeh2025encryption,hayashi2023non,Liang2008Secure}. However, both rely on assumptions that may not hold in highly dynamic wireless environments, including secure key management, trusted infrastructure, or a communication advantage over the eavesdropper. As wireless networks evolve toward infrastructure-less and highly heterogeneous deployments, these assumptions become increasingly difficult to guarantee. This motivates the need for a fundamentally different communication-security principle that derives protection from the communication process itself rather than from external resources.

Although cryptographic security and physical-layer security derive protection from different resources, they share a common security principle: each transmission is treated as an independent security event. Accordingly, the compromise or successful interception of one transmission has essentially no influence on the security of future communications.  This paper departs from this transmission-centric view by introducing a communication-security paradigm that exploits temporal dependency as a new security resource. Rather than protecting transmissions independently, the proposed paradigm deliberately couples successive transmissions, making the successful interpretation of future communications contingent upon correctly interpreting previous ones. Therefore, a synchronization failure propagates across future transmissions until synchronization is restored.

\begin{figure*}
    \centering
    \includegraphics[width=0.8\linewidth]{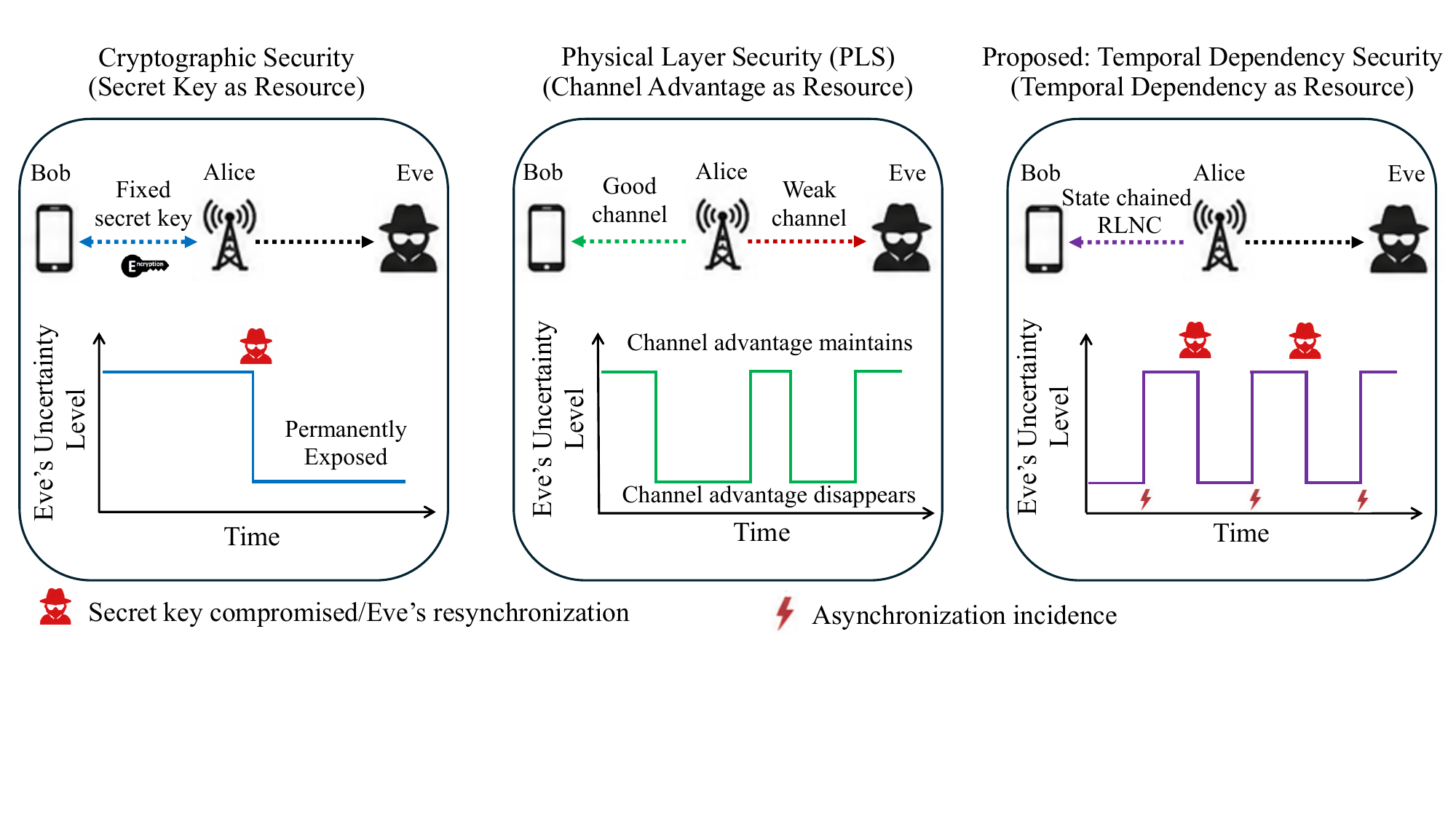}
    \vspace{-2cm}
    \caption{Comparison of security paradigms: Cryptographic security, physical layer security, and the proposed temporal dependency security. Unlike cryptographic security, where a key compromise permanently exposes future communications, and conventional physical-layer security, which requires a channel advantage, the proposed paradigm repeatedly re-establishes uncertainty in packet interpretation through eavesdropper asynchronization without relying on either assumption.}
    \label{fig:crypto_compare}
    \vspace{-0.4cm}
\end{figure*}

The proposed communication-security paradigm can be realized using any communication framework capable of creating temporal dependencies across successive transmissions. In this paper, we demonstrate one such realization using Random Linear Network Coding (RLNC), whose linear combinations naturally enable the information required to interpret future transmissions, referred to hereafter as the \emph{synchronization state}, to be embedded within previous transmissions. Specifically, a chained synchronization state is conveyed across consecutive transmission blocks, making recovery of the synchronization state necessary for the correct interpretation of subsequent communications. Unlike existing RLNC-based security techniques, which primarily protect the current coding generation~\cite{tassi2019intercept,chatzigeorgiou2022impact}, the proposed state-chained RLNC framework realizes the temporal-dependency principle. While the underlying security paradigm is general, the analytical and numerical results presented in this paper are specific to this RLNC realization.

The key advantage of exploiting temporal dependency is that a single synchronization failure can affect the interpretation of many future transmissions. Once the eavesdropper loses synchronization, subsequent transmissions remain uninterpretable until synchronization is restored, while the legitimate receiver remains synchronized. Hence, successful decoding no longer guarantees successful interpretation, which depends on recovering the evolving synchronization state. This property is particularly attractive for communication scenarios in which temporary compromise is difficult to avoid, such as satellite communications, UAV swarms, and infrastructure-less emergency networks. Unlike conventional stateful communication protocols, where state evolution primarily supports synchronization, authentication, or forward secrecy~\cite{yu2024multi,xiong2024conditional}, the proposed framework deliberately exploits it as the security mechanism itself, making state recovery a computational requirement rather than merely a protocol function.


The proposed framework is designed and evaluated under a deliberately conservative threat model that eliminates nearly all conventional sources of communication security. The transmitter has no knowledge of either receiver's channel, while the eavesdropper is assumed to be passive and possess complete knowledge of the communication system, including the protocols, coding procedures, acknowledgments, and even the cryptographic keys. Furthermore, the eavesdropper may have a substantially stronger channel than the legitimate receiver due to a larger antenna aperture, more receive antennas, or a more favorable location.

 Based on this conservative threat model, the main contributions of this paper are summarized as follows:
\begin{itemize}
\item We propose a communication-security paradigm based on temporal dependency, whereby the successful interpretation of future transmissions is intentionally coupled to the correct interpretation of previous ones. As one realization, we develop a state-chained RLNC transmission scheme that transforms a single eavesdropper synchronization failure into persistent asynchronization across future transmission blocks.

\item We analytically characterize how temporal dependency propagates synchronization failures, the expected time to its first occurrence, and the computational complexity of the eavesdropper's resynchronization.

\item We develop optimization strategies that achieve sub-second eavesdropper asynchronization even against an arbitrarily stronger eavesdropper, demonstrating the practical feasibility of the proposed paradigm.

\end{itemize}

Unlike the existing communication-security techniques, the proposed paradigm does not attempt to hide the transmitted packets themselves. Instead, it allows the eavesdropper to decode the received packets while preventing their correct interpretation by embedding the transmitted information among computationally indistinguishable original and dummy packets. As a result, communication security shifts from protecting packet recovery to protecting packet interpretation. Further, unlike many practical physical-layer security techniques that rely on receiver feedback (e.g., CSI) and cryptographic systems that require pre-established or interactively negotiated secret keys, the proposed framework requires only sparse acknowledgment bits that are already used for reliable communication.

Fig.~\ref{fig:crypto_compare} contrasts the proposed paradigm with cryptographic and physical-layer security~\cite{diffie2022new,clark2013sok,khan2023survey,shor1994algorithms}. It illustrates that temporal dependency establishes security by repeatedly inducing eavesdropper asynchronization rather than relying on secret credentials or channel advantage. Accordingly, temporal dependency preserves communication security even when secret keys are compromised and channel advantage no longer exists. To the best of our knowledge, existing communication systems employ temporal dependency primarily for synchronization, reliability, or protocol continuity rather than as the mechanism that provides communication security itself. 



The remainder of this paper is organized as follows. Section~II introduces the considered system model and communication assumptions. Section~III presents the proposed state-chained RLNC realization of the temporal-dependency communication-security paradigm. Section~IV provides the analytical characterization of the asynchronization and the computational complexity imposed on the eavesdropper. Section~V formulates the channel model, system metrics, and the corresponding optimization problems. Section~VI presents numerical results that validate the analytical framework and illustrate the trade-offs between communication performance and security. Finally, Section~VII concludes the paper.



\section{System and Communication Model}
In the considered system model, illustrated in Fig. \ref{fig:system_model}, Alice (A) transmits RLNC-encoded packets to Bob (B) over a point-to-point (PtP) link, while Eve (E) passively intercepts the transmissions. Consistent with the worst-case threat model introduced in Section I, Alice has no knowledge of the CSI of either Bob or Eve, while Eve is assumed to possess complete knowledge of the communication framework, including protocols, coding procedures, packet formats, acknowledgments, and even the CSI between Alice and Bob. Moreover, Eve  remains entirely silent throughout the communication process. This section introduces the baseline RLNC communication model used throughout the paper.

\begin{table}[t]
\centering
\caption{Notations and symbols}
\small
\renewcommand{\arraystretch}{0.95}
\setlength{\tabcolsep}{8pt}
\begin{tabular}{ll}
\hline
\textbf{Symbol} & \textbf{Description} \\
\hline
$\mathbf{a}_t^{(b)}$ & Coding  vector \\
$\alpha$ & Intentional interference coefficient \\
$\mathbf{B}_t/\mathbf{E}_t$ & Bob's$/$Eve's decoding matrix at time slot $t$ \\
$\textbf{d}^{(b)}/\textbf{x}^{(b)}$ & Vector of dummy$/$original packets  \\
$\text{dim}(.)$ & Matrix dimension\\
$\epsilon(.)$ & Interpretation uncertainty level\\
$\text{f}_\text{ACK}$(.) & PDF of  acknowledgment delay\\
$\text{f}_\text{B}$(.) & PDF of the time that Bob achieves full rank \\
$\text{f}_\text{A}$(.) & PDF of the time that Alice receives the ACK \\
$\ell(.)/\eta(.)$ & Packet latency$/$throughput \\
$\mathrm{m}(.)/\mathrm{r}(.)$ & Number of rows$/$rank of a matrix \\
$N/D$ & Number of packets$/$dummy packets  \\
$\mathcal{P}$(.) & Probability function \\

${\bf{p}}^{(b)}$& Vector of block packets at block $b$\\
$\mathrm{P}_{\text{asy}}$ & Probability that Eve loses the synchronization\\& state required for interpreting the next  block.\\
$\mathrm{P}_{\text{B}}/\mathrm{P}_{\text{E}}$ & Packet success probability at Bob$/$Eve \\
$q_{t}^{(b)}$ & RLNC-encoded packet\\
row(.) & Row space of a matrix\\
$\mathrm{s}^{(b)}$ & State vector of block $b$ \\
$t$ & Time slot \\
$\tau_q/t_b$ & Packet$/$block transmission time\\
$\tau_{\rm asy}$ & Mean time to the first asynchronization incidence\\
$\tau_{\rm cmp}$ & Computational time required to resynchronize \\
$T$ & The time slot that Bob achieves full rank\\
$T'$ & Time slot at which Alice receives the ACK \\
$T_{\text{ack}}$ & Acknowledgment delay \\
\hline
\end{tabular}
\label{tab:notation}
\end{table}

Let \(N\) denote the number of packets in each block, and let
\[
{\bf{p}}^{(b)}=[p_1^{(b)}, p_2^{(b)}, \ldots, p_N^{(b)}]^\top
\]
represent the set of packets in block \(b\), where $
p_i^{(b)} \in {\mathbb{F}}_2^n
$
denotes the \(i^{\text{th}}\) packet of block \(b\) with length \(n\) over GF(2) at Alice for transmission to Bob. Each transmitted packet is generated using RLNC as follows.
\begin{equation}
\begin{aligned}
q_t^{(b)}=\bigoplus_{i=1}^N a_{ti}^{(b)}p_i^{(b)},
\end{aligned}
\label{eq:RLNC-packet}
\end{equation}
where  $a_{ti}^{(b)}\sim$\,Bern($0.5$) is randomly generated at Alice with a symmetric Bernoulli distribution.  Then, the coding vector
\[
{\bf a}_t^{(b)}=[a_{t1}^{(b)}, a_{t2}^{(b)}, \ldots, a_{tN}^{(b)}]
\]
is appended as a header to the RLNC-encoded packet $q_t^{(b)}$
for transmission at time slot \(t\). Since the coding coefficients are binary, the coding vector is represented as a binary vector of length \(N\). Each transmitted packet therefore includes the RLNC-encoded packet, its associated coding vector, and the protocol headers required for transmission and decoding.

At the beginning of block $b$, Bob and Eve independently construct time-varying decoding matrices, denoted by ${\bf B}_t\in\mathbb{F}_2^{m({\bf B}_t)\times N}$ and ${\bf E}_t\in\mathbb{F}_2^{m({\bf E}_t)\times N}$, respectively. Upon the transmission of packet $q_t^{(b)}$ at time slot $t$, Bob and Eve successfully receive it with probabilities $\mathrm{P}_{\rm B}$ and $\mathrm{P}_{\rm E}$, respectively, whose analytical expressions for Rayleigh fading channels are introduced in Section V. 
If the RLNC-encoded packet of block $b$ is successfully received at time slot $t$, the packet $q_t^{(b)}$ is appended as a new row to the packet matrix of the corresponding receiver, namely ${\bf Q}_t^{\rm B}$ for Bob and ${\bf Q}_t^{\rm E}$ for Eve. Likewise, the associated coding vector ${\bf a}_t^{(b)}$ is appended as a new row to the corresponding decoding matrix, i.e., ${\bf B}_t$ for Bob and ${\bf E}_t$ for Eve.

As soon as Bob's decoding matrix becomes full rank, i.e.,
$
{\rm r}({\bf B}_T)=N,
$
at time slot $t=T$, Bob decodes the block packets according to
\begin{equation}
\begin{aligned}
{\bf{p}}^{(b)}={\bf{B}}_T^{-1}{\bf{Q}}_T^{\text{B}}.
\end{aligned}
\label{eq:RLNC-decoding-Bob}
\end{equation}
At the same instant, Bob initiates the acknowledgment (ACK) procedure toward Alice. Let the random variable $T_{\rm ack}$ denote the ACK delay, i.e., the interval between the time slot $T$ at which Bob's decoding matrix becomes full rank and the time slot at which Alice receives the ACK.

\begin{figure}
    \centering
    \includegraphics[width=0.7\linewidth]{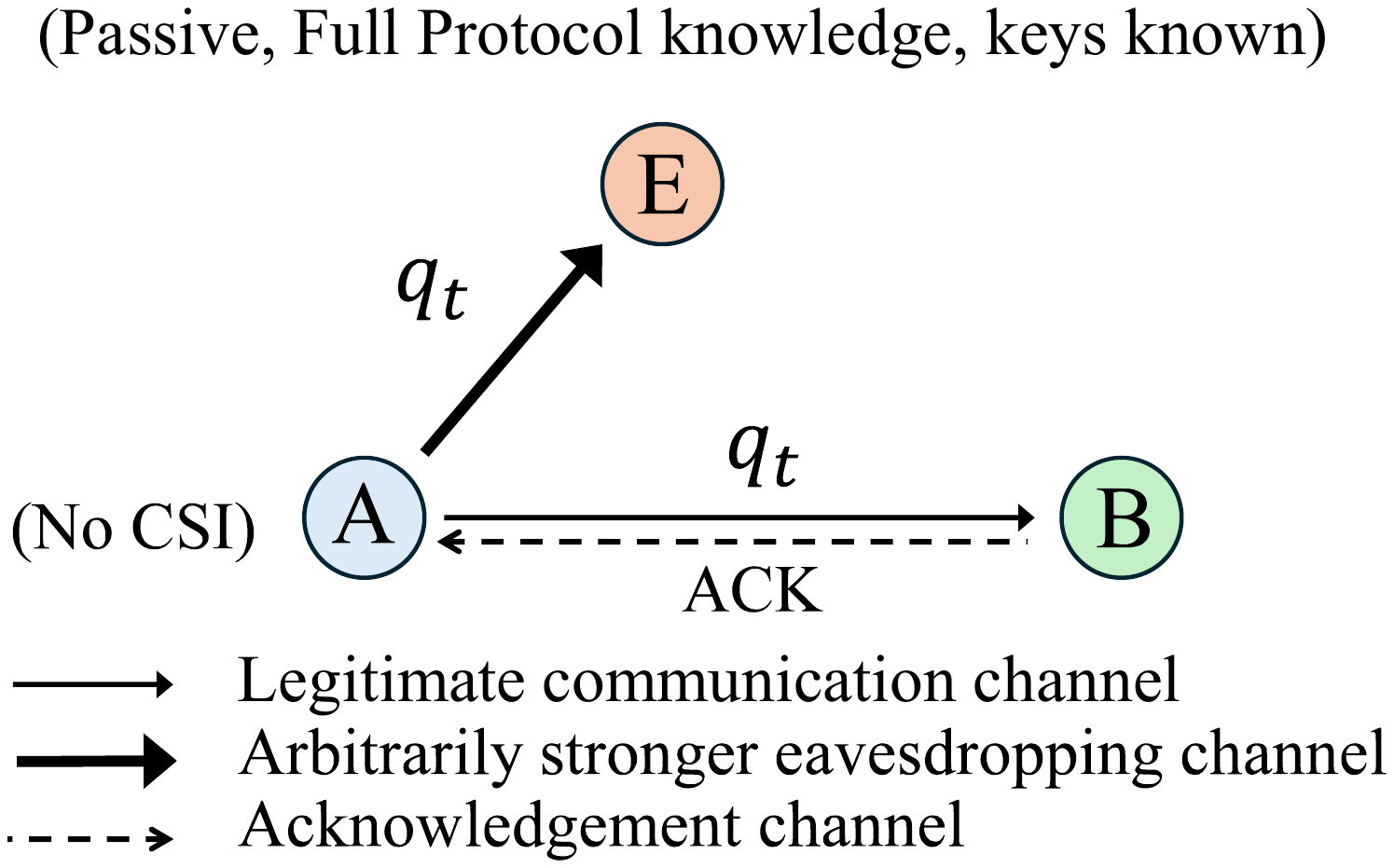}
    \vspace{-0.7cm}
    \caption{Baseline system and communication model. Alice (A) transmits RLNC-encoded packets to Bob (B) over a PtP link without CSI. Passive Eve (E), with full protocol knowledge, intercepts the transmissions and may have a stronger channel, while Bob returns an ACK after successful decoding.}
   
    \label{fig:system_model}
     \vspace{-0.3cm}
\end{figure}

Upon receiving the ACK, Alice stops transmitting linear combinations of the current block at time slot
\begin{equation}
\begin{aligned}
&T'=T+T_{\text{{ack}}}.
\label{eq:T_prime}
\end{aligned}
\end{equation}
During the interval from $T+1$ to $T'$, Bob has already decoded the block, while Alice is unaware of this fact. Therefore, Alice continues transmitting additional RLNC-encoded packets of the current block until the ACK is received at time slot $T'$. These packets do not contribute to Bob's decoding process, since their coding vectors lie in the row space of ${\bf B}_T$. However, they may increase the rank of Eve's decoding matrix.

Then, if Eve achieves full rank by time slot $T'$, i.e., $\mathrm{r}({\bf E}_{T'})=N$, Eve can recover the block packets as
\begin{equation}
\begin{aligned}
{\bf{p}}^{(b)}={\bf{E}}_{T'}^{-1}{\bf{Q}}_{T'}^{\text{E}}.
\end{aligned}
\label{eq:RLNC-decoding-Eve}
\end{equation}
However, whether these decoded packets can be correctly interpreted depends on the synchronization state introduced in Section III.
In contrast, if Eve remains rank deficient, i.e.,
$
\mathrm{r}({\bf E}_{T'})<N,
$
she cannot recover all packets in ${\bf p}^{(b)}$, preventing recovery of the transmitted block. At the end of time slot $T'$, Alice proceeds with the transmission of block $b+1$, regardless of Eve's decoding status.

Throughout the remainder of this paper, we distinguish between several related concepts. 
\emph{Interpretation} refers to correctly identifying which decoded packets correspond to original data and which correspond to dummy packets. \emph{Synchronization} denotes possession of the synchronization state required to interpret the current transmission block. An \emph{asynchronization incidence} occurs when Eve first fails to recover the synchronization state required for the next block. 
Finally, \emph{interpretation uncertainty} quantifies the fraction of decoded packets whose identities remain ambiguous without knowledge of the synchronization state.

\section{Temporal-Dependency Construction via State-Chained RLNC}\label{section:structure}

To realize the proposed temporal-dependency security paradigm, the synchronization state of each block is embedded in the dummy packets of the preceding block, intentionally coupling the interpretation of successive transmission blocks while leaving the RLNC decoding process itself unchanged. Here, the synchronization state of block $b+1$, which determines the dummy-packet locations, is generated during the transmission of block $b$ and secret-shared across its dummy packets. As a result, recovering the state of one block becomes necessary to interpret the next, establishing temporal dependency across successive blocks before RLNC encoding.


\subsection{State-Chained RLNC Construction}

Let the vector of original data packets at block $b$ be 
\begin{equation}
    {\textbf{x}}^{(b)}=[x_1^{(b)}, x_2^{(b)}\ldots,x_{N-D}^{(b)}],
\end{equation}  and  the vector of dummy-packets be 
\begin{equation}
    \textbf{d}^{(b)}=[d_1^{(b)}, d_2^{(b)}\ldots,d_{D}^{(b)}],
\end{equation}where $D$ denotes the number of dummy packets. 

 The state of block $b$ is represented by a binary vector indicating the locations of the dummy packets within the transmission block, expressed as
\begin{equation}
    {\textbf{s}}^{(b)}=[s_1^{(b)},s_2^{(b)},\ldots,  s_N^{(b)}]\in\mathbb{F}_2^{N}, 
\end{equation}
where $s_j^{(b)}=\{0,1\}$ for $1\leq j\leq N$, with the weight of${\text{w}}({\textbf{s}}^{(b)})=D$. 
Using the state, dummy packets are inserted into the vector of original packets as
\begin{equation}
\begin{aligned}
{\textbf{p}}^{(b)}=[p_1^{(b)}, p_2^{(b)},\ldots,p_N^{(b)}]^\top=\operatorname{ins}({\textbf{x}}^{(b)},{\textbf{d}}^{(b)}, {\textbf{s}}^{(b)}),
\end{aligned}\label{eq:insertion}
\end{equation}
where
\[
p_j^{(b)} =
\begin{cases}
d_k^{(b)}, & \text{if } s_j^{(b)}=1,\quad k=1,\ldots,D,\\[2mm]
x_{\iota(j)}^{(b)}, & \text{if }  s_j^{(b)}=0,
\end{cases}
\]
for $1\leq j\leq N$ with 
$\iota(j)=|\{s_i^{(b)}=0: i\leq j\}|
$.

 The state of block $b+1$, denoted by ${\textbf{s}}^{(b+1)}$, is randomly generated at the beginning of block $b$. This state is mapped to the dummy-packet vector through
\begin{equation}
\begin{aligned}
{\textbf{d}}^{(b)}=\mathcal{S}({\textbf{s}}^{(b+1)}), 
\end{aligned}\label{eq:secret_sharing}
\end{equation}
where $\mathcal{S}(.)$ is a reversible secret-sharing mapping of a vector in ${\mathbb{F}}_2^N$ with weight $D$ into a vector of $D$ packets. The secret share mapping should satisfy the following  properties:
\begin{enumerate}[label=(\roman*)]
    
    \item  there must exist a combination function that uniquely recovers the state, described as
    \begin{equation}
    {\textbf{s}}^{(b+1)}=\mathcal{C}({\textbf{d}}^{(b)}),
    \end{equation}
    
    \item  the synchronization state  can be recovered only from the complete set of dummy packets, and finally,
    
    \item the generated dummy packets in ${\textbf{d}}^{(b)}$ should follow a statistically and semantically indistinguishable distribution from the original data packets in ${\textbf{x}}^{(b)}$.
\end{enumerate}
The latter condition specifically prevents Eve from using application-layer validation, statistical inference, or learning-based methods to distinguish any individual dummy packet, or any incomplete subset of dummy packets, from the original data packets. This property is fundamental to the security of the proposed framework, forcing Eve to rely on brute-force search to determine the complete set of dummy packets. 

Fig.~\ref{fig:encoder} summarizes the construction process. At the beginning of block $b$, the synchronization state of block $b+1$ is generated and secret-shared into the dummy packets. These dummy packets are then inserted into the current data block according to the current synchronization state before RLNC encoding, thereby embedding the information required for the next transmission block within the current one.


\subsection{Design of the Dummy Packets}\label{subsection:dummy_distribution}
To prevent Eve from identifying the dummy packets, all packets in the transmitted block should exhibit similar statistical characteristics, protocol headers, and packet signatures. Therefore, even after successfully decoding the entire RLNC block, Eve cannot correctly interpret the decoded packets because she cannot distinguish original packets from dummy packets without knowledge of the synchronization state. As a result, Eve is forced to rely on brute-force search to identify the complete set of dummy packets. Accordingly, each dummy packet should be generated to be statistically and semantically indistinguishable from the original data packets.

As commonly encountered in encrypted or compressed traffic, the original data packets in $\textbf{x}^{(b)}$ 
 are assumed to be  statistically indistinguishable from uniformly random bit sequences, with entropy of $\textbf{H}(x_i^{(b)})=n$ bits for all $i$, where $n$ denotes the number of bits in each original data packet.
 Accordingly, the secret share mapping can be designed as 
\begin{equation}
\begin{aligned}
\textbf{d}^{(b)}=\mathcal{S}(\textbf{s}^{(b+1)})=\Big[&d_1^{(b)}\sim \text{Bern}(0.5)^n,\\&d_2^{(b)}\sim \text{Bern}(0.5)^n,  \\& 
~~~~~~~\vdots
\\& 
d_{D-1}^{(b)}\sim \text{Bern}(0.5)^n, 
\\& 
d_{D}^{(b)}= \text{L}(\textbf{s}^{(b+1)})\oplus \big(\bigoplus_{i=1}^{D-1}d_i^{(b)}\big)\Big],
\label{eq:dummy_generator_entropy_1}
\end{aligned}
\end{equation}
wherein the first $D-1$ dummy packets are independently generated as random binary vectors, whereas the last packet embeds the synchronization state. Further, $\text{Bern}(0.5)^n$ denotes the function that generates a vector of size $n$ with symmetric Bernoulli distribution and  $\text{L}: \mathbb{F}_2^{N}\rightarrow\mathbb{F}_2^n$ can be any invertible mapping with $n>N$.  The construction described in \eqref{eq:dummy_generator_entropy_1} ensures that every dummy packet contributes to state recovery while preserving statistical indistinguishability.

 The conditions (i) and (ii) follow directly from the construction of $\textbf{d}^{(b)}$ via \eqref{eq:dummy_generator_entropy_1}.
With this construction, condition~(iii) is also satisfied, since all dummy packets are randomly generated with entropy
$
\mathbf{H}(d_i^{(b)})=\mathbf{H}(x_j^{(b)})=n
$ bits for all $i$ and $j$.
Conditions~(i) and~(ii) are also satisfied by the definition of $d_D^{(b)}$ in \eqref{eq:dummy_generator_entropy_1}, as the complete set of dummy packets uniquely determines the state through a combination function
\begin{equation}
\begin{aligned}
{\textbf{s}}^{(b+1)}=\mathcal{C}(\textbf{d}^{(b)})=\text{L}^{-1}\big(\bigoplus_{i=1}^D d_i^{(b)}\big),
\label{eq:combinator_entropy_1}
\end{aligned}
\end{equation}
 while any proper subset of the dummy packets is insufficient to recover it. 

\begin{figure}
    \centering
    \includegraphics[width=1\linewidth]{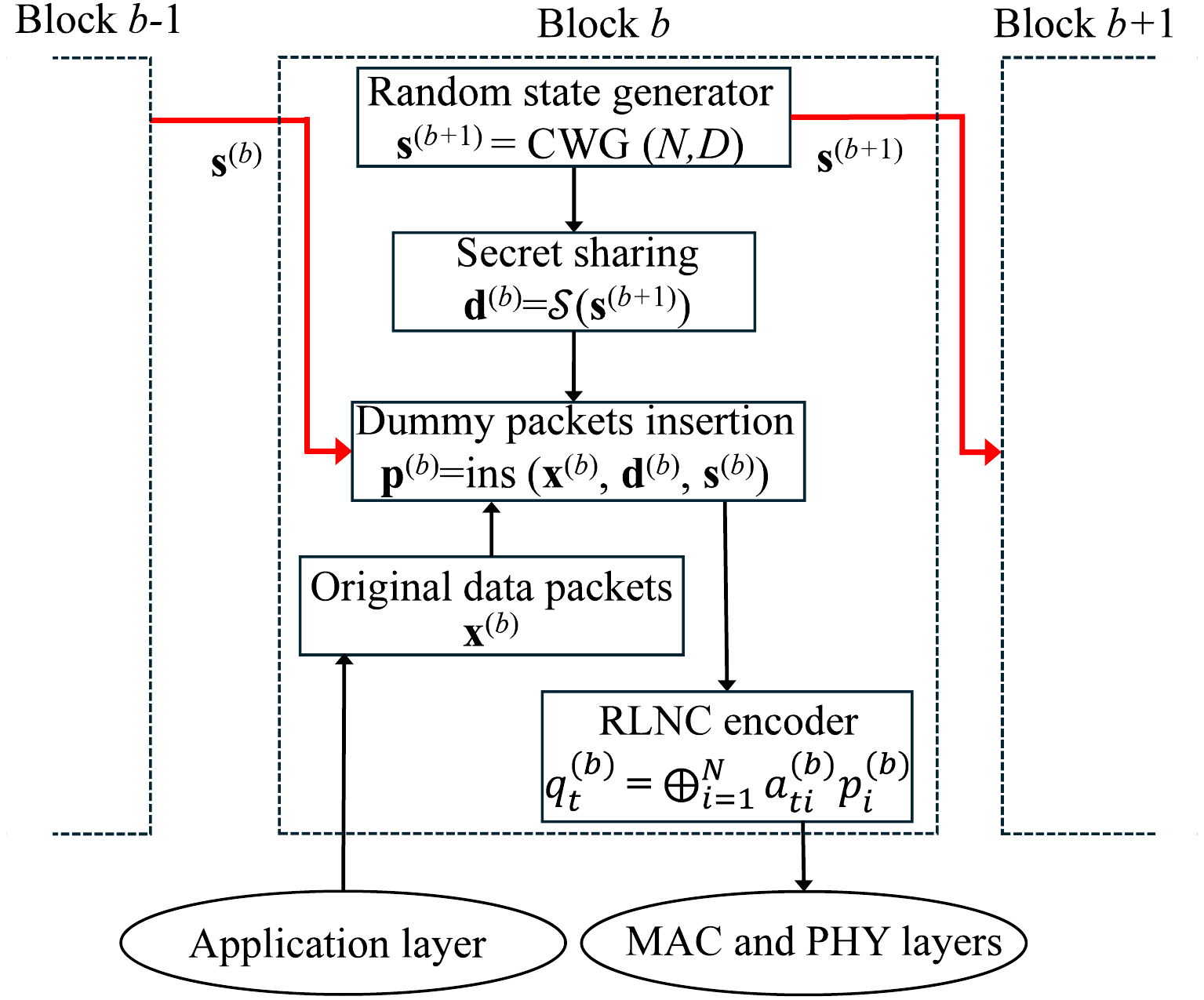}
    \vspace{-0.4cm}
    \caption{ Construction of temporal dependency communication-security paradigm using a state-chained RLNC framework. The synchronization state of each block is secret-shared into the dummy packets of the preceding block, making the correct interpretation of each block depend on recovering the synchronization state embedded in the previous block (red arrows).}
    \vspace{-0.2cm}
    \label{fig:encoder}
\end{figure}

If the original data packets are subject to application-layer validation, the same compression, encryption, packetization, and higher-layer protocol processing can be applied to the dummy packets, producing protocol-compliant packet structures~\cite{wright2009traffic,seo2016discussion}.

\subsection{Asynchronization Incidence and Propagation}

Consider transmission block $b$. Let $\mathbf{E}_{T'}$ denote Eve's decoding matrix at time slot $T'$, when Alice receives Bob's acknowledgment and terminates transmission of the current block. Since Bob is synchronized at the beginning of block $b$, he correctly separates the original and dummy packets after decoding and recovers the synchronization state of block $b+1$ using \eqref{eq:combinator_entropy_1}. Hence, Bob successfully decodes every block and therefore remains synchronized, allowing correct interpretation of every subsequent block.

For Eve, synchronization at block $b$ is maintained whenever $\textbf{s}^{(b)}$
 lies in the row space of her decoding matrix as
\begin{equation}
\mathbf{s}^{(b)} \in \mathrm{row}\!\left(\mathbf{E}_{T'}\right).\end{equation}
 In this case, Eve can form the same linear combination of her decoded packet observations that isolates the combination of dummy packets $\bigoplus_{i=1}^D d_i^{(b)}$, required by the combination function expressed in  \eqref{eq:combinator_entropy_1}. Otherwise, that combination is not identifiable from her available observations.
In contrast, an asynchronization incidence occurs when
\begin{equation}
\textbf{s}^{(b)} \notin \mathrm{row}\!\left(\mathbf{E}_{T'}\right),
\label{eq:async_condition}
\end{equation}
in which Eve cannot recover the dummy-packet combination or determine the next state $\textbf{s}^{(b+1)}$. The probability of this event is denoted by $\mathrm{P}_{\rm asy}$ and is characterized in Section~IV.

Once the synchronization state $\textbf{s}^{(b+1)}$ is lost, Eve may continue decoding subsequent RLNC blocks, starting from block $b+1$, whenever sufficient coded packets are received. However, these decoded blocks cannot be correctly interpreted because the dummy-packet locations remain unknown.
 As a result, she cannot recover the synchronization state of block $b+2$ via
\begin{equation}
\mathbf{s}^{(b+2)}
=
\mathcal{C}\!\left(\mathbf{d}^{(b+1)}\right),
\end{equation}
causing eavesdropper asynchronization to propagate indefinitely across future blocks.

To resynchronize, Eve must correctly identify the dummy packets of at least one asynchronous full-rank block. Since the dummy packets are statistically indistinguishable from the original packets, as discussed in Section~III-B, they cannot be identified using protocol signatures or statistical features. Therefore, Eve must exhaustively search over all combinations of candidate dummy-packet subsets. Once the correct subset is identified for block $b+i$, Eve obtains
\begin{equation}
\mathbf{s}^{(b+i+1)}
=
\mathcal{C}\!\left(\mathbf{d}^{(b+i)}\right),
\end{equation}
thereby restoring synchronization from block $b+i+1$ onward.

Unlike conventional cryptographic systems, the proposed framework does not rely on a secret bootstrap phase. The synchronization state of the first transmission block, $\textbf{s}
^{(1)}$, is assumed to be public and may therefore be recovered by Eve. Consequently, confidentiality of the first communication block is not guaranteed. However, once Eve fails to recover the synchronization state of a subsequent block because insufficient RLNC observations prevent recovery of the dummy-packet combination, all future synchronization states become unavailable, since each new state is embedded only in the previous block. Thereafter, the temporal dependency becomes self-sustaining, with each synchronization state generated and conveyed through the communication itself, eliminating the need for any pre-shared or subsequent key-refresh procedures.


\section{Computational Analysis}
The time at which Bob achieves full rank determines when he transmits the acknowledgment, thereby defining the interval during which Alice continues transmitting additional RLNC packets and Eve continues collecting observations. Accordingly, the probability distribution of Bob's full-rank decoding time underlies the characterization of Eve's observation window and the probability of asynchronization, and is described by the following theorem.
\begin{Theorem}
\label{Lemma_Bob_Ack_time_pdf}
The probability that the decoding matrix of Bob becomes full rank  at time slot $t=T$ is denoted as $\text{f}_\text{B}(t=T)$, and described as
\begin{equation}
\begin{aligned}
&\text{f}_\text{B}(t=T)=\mathcal{P}\Big(\text{r}({\bf{B}}_T)=N,\text{r}({\bf{B}}_{T-1})=N-1\Big)  \\& 
=\sum_{M=N}^T \text{F}_1(M,N)\text{F}_2(T,M,P_\text{B})\text{F}_3(M,N)
\label{eq:Bob_Ack_time_pdf}
\end{aligned}
\end{equation}
 where
\begin{equation*}
\begin{aligned}
&\text{F}_1(M,N)=\dfrac{2^{-(M-2)N-1}}{2^N-1},\\
& \text{F}_2(T,M,\mathrm{P}_\text{B})=\binom{T-1}{M-1} \mathrm{P}_\text{B}^{M} (1-\mathrm{P}_\text{B})^{T-M},
\label{eq:defining_F1_F2}
\end{aligned}
\end{equation*}
and
\begin{equation*}
\begin{aligned}
\text{F}_3(M,N)=\prod_{i=0}^{N-2} \dfrac{(2^{M-1}-2^i)(2^N-2^i)}{2^{N-1}-2^i}
\label{eq:Defining_F3}
\end{aligned}
\end{equation*}
for $T\geq N$ and $\text{f}_\text{B}(t=T)=0$ otherwise.

\end{Theorem}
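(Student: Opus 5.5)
The plan is to condition on the number of coded packets Bob has successfully received by time slot $T$, and to treat the reception process separately from the rank process. These two processes are independent: receptions are i.i.d.\ with probability $\mathrm{P}_\text{B}$ per slot, and the coding vectors ${\bf a}_t^{(b)}$ are i.i.d.\ uniform on $\mathbb{F}_2^N$ because $a_{ti}^{(b)}\sim$ Bern$(0.5)$. The event $\{\mathrm{r}({\bf B}_T)=N,\ \mathrm{r}({\bf B}_{T-1})=N-1\}$ forces two things. First, a packet must be received exactly in slot $T$, since otherwise ${\bf B}_T={\bf B}_{T-1}$. Second, the rank must jump from $N-1$ to $N$ with that last row. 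I will therefore decompose over $M$, the total number of rows of ${\bf B}_T$. Since at least $N$ rows are needed for full rank, $M$ ranges from $N$ to $T$, and $\text{f}_\text{B}(t=T)=0$ for $T<N$.

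\emph{Reception term.} The probability that exactly $M-1$ of the first $T-1$ slots are successful and slot $T$ is successful is
\[
\binom{T-1}{M-1}\mathrm{P}_\text{B}^{M}(1-\mathrm{P}_\text{B})^{T-M},
\]
which is exactly $\text{F}_2(T,M,\mathrm{P}_\text{B})$. Given $M$, the $M$ received coding vectors are i.i.d.\ uniform on $\mathbb{F}_2^N$, so every $M\times N$ binary matrix is equally likely, each with probability $2^{-MN}$.

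\emph{Rank term.} Given $M$, I count the matrices whose first $M-1$ rows have rank $N-1$ and whose $M$-th row lies outside that $(N-1)$-dimensional row space.
\begin{itemize}
\item For the first $M-1$ rows, I invoke the classical count of $m\times n$ matrices of rank $r$ over $\mathbb{F}_2$, namely $\prod_{i=0}^{r-1}\frac{(2^m-2^i)(2^n-2^i)}{2^r-2^i}$. I would justify it by choosing an ordered basis of the row space and an ordered basis of the column space, then dividing by the ordered bases of the $r$-dimensional space. Setting $m=M-1$, $n=N$ and $r=N-1$ gives $\text{F}_3(M,N)$.
\item For the $M$-th row, the number of vectors outside an $(N-1)$-dimensional subspace of $\mathbb{F}_2^N$ is $2^N-2^{N-1}=2^{N-1}$.
\end{itemize}
The conditional probability is then $2^{N-1}\,\text{F}_3(M,N)\,2^{-MN}$. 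Summing the product of the reception term and this conditional probability over $M$ gives the stated form \eqref{eq:Bob_Ack_time_pdf}.

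The step I expect to be the main obstacle is the normalization constant $\text{F}_1(M,N)$. My direct count gives the factor $2^{N-1-MN}$, whereas $\text{F}_1(M,N)=2^{-(M-2)N-1}/(2^N-1)=2^{N-1-MN}\cdot 2^N/(2^N-1)$. These differ by the factor $2^N/(2^N-1)$. I would first test the stated $\text{F}_1$ against mine on small cases, e.g.\ $N=1$, $T=M=1$, where the true value is $\mathrm{P}_\text{B}/2$. If the two disagree, the likely explanations are that the authors condition on nonzero coding vectors, or that they rewrite the product $\text{F}_3$ with a shifted index, which absorbs a factor $(2^N-1)$. I would then either adopt that convention explicitly or record the corrected constant.
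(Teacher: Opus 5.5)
Your route is the paper's. The paper factors $\text{f}_\text{B}(t=T)$ as $\mathcal{P}(\mathrm{r}(\mathbf{B}_{T-1})=N-1)$ times a rank-increment factor. It expands the first term over the $M-1$ rows of $\mathbf{B}_{T-1}$ using the same binomial reception law. It counts rank-$(N-1)$ matrices through $\mathbf{B}=\mathbf{CD}$ modulo invertible $(N-1)\times(N-1)$ matrices, which is your ordered-bases count and gives $\text{F}_3(M,N)=\Phi(M-1,N,N-1)$.

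Your constant $2^{N-1-MN}$ is the correct one under the paper's model $a_{ti}^{(b)}\sim\mathrm{Bern}(0.5)$. The paper arrives at $\text{F}_1$ because it takes the increment factor to be $\frac{2^{N-1}}{2^N-1}\mathrm{P}_\text{B}$. That normalizes the last coding vector over the $2^N-1$ nonzero vectors only, while the first $M-1$ rows are normalized over all $2^{(M-1)N}$ matrices. Indeed $2^{-(M-1)N}\cdot\frac{2^{N-1}}{2^N-1}=\text{F}_1(M,N)$. So the factor $2^N/(2^N-1)$ you isolated is an error in the statement, not in your argument.

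The weak point of your proposal is the reconciliation step, which would likely lead you to the wrong fix. Your test case $N=1$, $T=M=1$ is degenerate: there are no earlier rows, so the stated formula's value $\mathrm{P}_\text{B}$ coincides with a consistent ``nonzero coding vectors'' model. The test would therefore push you toward adopting that convention, but that convention does not reproduce $\text{F}_1$ in general. It also changes the first factor: you would have to count rank-$(N-1)$ matrices with no zero rows and divide by $(2^N-1)^{M-1}$. At $N=2$, $T=M=2$, for example:
\begin{itemize}
\item the stated formula gives $\mathrm{P}_\text{B}^2/2$;
\item the Bernoulli model gives $3\mathrm{P}_\text{B}^2/8$, which is your value;
\item the nonzero model gives $2\mathrm{P}_\text{B}^2/3$.
\end{itemize}
The shifted-index explanation is also ruled out, since $\text{F}_3$ is exactly $\Phi(M-1,N,N-1)$ with no re-indexing.

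The correct conclusion is to record the corrected constant $\text{F}_1(M,N)=2^{-(M-1)N-1}$. As stated, $\sum_{T}\text{f}_\text{B}(t=T)=2^N/(2^N-1)>1$, so the expression is not a probability mass function. The error is negligible for the block sizes $N\geq 40$ used in the numerics.
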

\begin{proof}
The proof is provided in Appendix \ref{Appendix_A}.
\end{proof}
 
Since $T'$
 is the sum of the decoding time $T$ and the ACK delay, its PDF is obtained by convolution as
 \begin{equation}
\begin{aligned}
&{\text{f}}_\text{A}(t)={\text{f}}_\text{B}(t)*{\text{f}}_{\text{ACK}}(t),
\label{eq:T_prime_pdf}
\end{aligned}
\end{equation}
where ${\text{ f}}_{\text{ACK}}(t=T_{\text{ack}})$ refers to the PDF of the ACK delay.

An asynchronization incidence occurs whenever the dummy-packet state vector does not belong to the row space of Eve's final decoding matrix, i.e., $\textbf{s}\notin \text{row}(\textbf{E}_{T'})$. In this case, Eve cannot recover the dummy-packet combination or determine the state of the next block. The resulting probability of asynchronization depends on three random mechanisms: Bob's decoding time, the ACK delay, and Eve's packet reception. The expression is obtained by averaging the conditional asynchronization probability over Bob's decoding time, the ACK delay, and Eve's packet receptions before and after Bob achieves full rank.

\begin{Theorem}\label{Lemma:P_async}
The probability of asynchronization incidence for Eve in a block is given as
\begin{equation}
\begin{aligned}
&\text{P}_{\rm asy}=\mathcal{P}\!\left(
{\bf s}\notin {\rm row}({\bf E}_{T'})
\right)
\\&
=\sum_{T'=N}^{\infty}
\sum_{T=N}^{T'}
\text{f}_{\rm ACK}(T'-T)
\sum_{M=N}^{T}
\text{f}_{\text{B},M}(T,M)
\text{P}_{{\rm asy}|T',T,M},
\end{aligned}
\label{eq:theorem_Pasy_exact_G}
\end{equation}
where
\begin{equation}
\begin{aligned}
\text{f}_{\text{B},M}&(T,M)
=
2^{-(M-1)N}
\Phi(M-1,N,N-1)
\\
& \times
\binom{T-1}{M-1}
\text{P}_\text{B}^{M-1}(1-\text{P}_\text{B})^{T-M}
\text{P}_\text{B}
\frac{2^N-2^{N-1}}{2^N-1},
\end{aligned}
\label{eq:theorem_fBM_G}
\end{equation}
and 
\begin{equation}
\begin{aligned}
&\text{P}_{{\rm asy}|T',T,M}
=\\&
\sum_{z=0}^{1}
\text{P}_\text{E}^z(1-\text{P}_\text{E})^{1-z}
\sum_{K_{\rm pre}=0}^{M-1}
\binom{M-1}{K_{\rm pre}}
\text{P}_\text{E}^{K_{\rm pre}}
(1-\text{P}_\text{E})^{M-1-K_{\rm pre}}
\\
&\quad \times
\sum_{r_{\rm pre}=0}^{\min(K_{\rm pre},N-1)}
\Theta(M-1,N,N-1,K_{\rm pre},r_{\rm pre})
\\
&\quad \times
\sum_{K=0}^{T'-M}
\binom{T'-M}{K}
\text{P}_\text{E}^K(1-\text{P}_\text{E})^{T'-M-K}
\\
&\quad \times
\sum_{r'=r''}^{N-1}
\Psi(K,N-r'',r'-r'')
\frac{2^N-2^{r'}}{2^N-1},
\end{aligned}
\label{eq:theorem_Pasy_cond_exact_G}
\end{equation}
where  $r''=r_{\rm pre}+z$. Here, the helper functions are
\begin{equation}
\Psi(m,n,r)=2^{-mn}\Phi(m,n,r),
\label{eq:theorem_Psi_def}
\end{equation}and 
\begin{equation}
\begin{aligned}
&\Theta(m,N,R,K,r'')
\\&=
\frac{
\Phi(K,N,r'')\,
2^{(m-K)r''}\,
\Phi(m-K,N-r'',R-r'')
}{
\Phi(m,N,R)
},
\label{eq:theorem_Theta_final_H}
\end{aligned}
\end{equation}
where
\begin{equation}
\Phi(m,n,r)
=
\frac{
\prod_{i=0}^{r-1}(2^m-2^i)(2^n-2^i)
}{
\prod_{i=0}^{r-1}(2^r-2^i)
}.
\label{eq:theorem_Phi_def_H}
\end{equation}

\end{Theorem}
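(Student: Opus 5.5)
We prove the formula by conditioning in stages and then averaging over the conditioning variables. First we condition on Alice's stopping slot $T'$, Bob's full-rank slot $T$, and the number $M$ of packets Bob has received by slot $T$. Since $T'=T+T_{\rm ack}$ with $T_{\rm ack}$ independent of the reception process, the weight of $(T',T)$ splits as $\text{f}_{\rm ACK}(T'-T)$ times the joint probability of $(T,M)$. This gives the outer triple sum. The remaining work is to justify $\text{f}_{\text{B},M}$ and the conditional probability $\text{P}_{{\rm asy}|T',T,M}$.

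\textbf{The joint law $\text{f}_{\text{B},M}(T,M)$.} This follows the argument of Theorem~\ref{Lemma_Bob_Ack_time_pdf}, but keeps $M$ explicit rather than summing it out. Bob's $M$-th reception occurs at slot $T$, and the first $M-1$ receptions fall among the first $T-1$ slots. This gives the binomial factor times $\text{P}_\text{B}$. The first $M-1$ received coding vectors are i.i.d.\ uniform on $\mathbb{F}_2^N$, and the number of $(M-1)\times N$ binary matrices of rank $N-1$ is $\Phi(M-1,N,N-1)$, the standard count of rank-$r$ matrices. Their probability is therefore $2^{-(M-1)N}\Phi(M-1,N,N-1)$. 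Given that rank, the $M$-th vector must lie outside the current $(N-1)$-dimensional row space, which happens with probability $(2^N-2^{N-1})/2^N$. The stated factor $(2^N-2^{N-1})/(2^N-1)$ suggests the paper conditions on a nonzero coding vector, and I would adopt that convention consistently.

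\textbf{The conditional probability $\text{P}_{{\rm asy}|T',T,M}$.} I would split Eve's observations into three parts:
\begin{itemize}
\item the $M-1$ slots at which Bob received before his final reception;
\item the single slot at which Bob's final, rank-completing packet is sent, with indicator $z$;
\item the $T'-M$ remaining slots.
\end{itemize}
The remaining slots comprise Bob's erased slots and the post-$T$ slots up to $T'$. Their coding vectors are uniform and independent of Bob's conditioning, so Eve's $K$ receptions among them are binomial.

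The key step is Eve's rank $r_{\rm pre}$ after receiving $K_{\rm pre}$ of Bob's first $M-1$ vectors, conditioned on those $M-1$ vectors having rank $N-1$. Here $\Theta$ arises as a ratio of counts. We count the pairs consisting of a $K_{\rm pre}$-row submatrix of rank $r_{\rm pre}$, which gives $\Phi(K,N,r'')$ choices, and a completion of the remaining $m-K$ rows to total rank $R$. In such a completion, each remaining row decomposes as a component in the existing span (giving the factor $2^{(m-K)r''}$) and a component in the quotient of dimension $N-r''$ (giving $\Phi(m-K,N-r'',R-r'')$). We then divide by the total count $\Phi(m,N,R)$. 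Verifying this quotient-space decomposition rigorously, and in particular that the quotient count is exactly $\Phi(m-K,N-r'',R-r'')$, is the main obstacle.

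Next, Eve's reception of Bob's completing packet raises her rank to $r''=r_{\rm pre}+z$. I would argue this increment is deterministic: that packet lies outside Bob's $(N-1)$-dimensional span, which contains Eve's current span. Then the $K$ further uniform vectors raise Eve's rank from $r''$ to $r'$. The probability of this is $\Psi(K,N-r'',r'-r'')$, obtained by working in the quotient space of dimension $N-r''$ with the same $2^{-mn}\Phi$ count.

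Finally, given Eve's final rank $r'<N$, the state ${\bf s}$ is a fixed nonzero vector of weight $D$ and is independent of her uniformly random row space. By symmetry it lies outside a random $r'$-dimensional subspace with probability $(2^N-2^{r'})/(2^N-1)$. If $r'=N$, asynchronization is impossible, which is why the sum over $r'$ stops at $N-1$. Multiplying all the factors and summing over $z$, $K_{\rm pre}$, $r_{\rm pre}$, $K$ and $r'$ yields the stated expression.
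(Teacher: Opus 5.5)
Your proposal is correct and follows the paper's proof (Appendix~\ref{appendix:P_asy}) essentially step for step: conditioning on $(T',T,M)$, then on $Z$, $K_{\rm pre}$ and $R_{\rm pre}$ (with $\Theta$ from the same quotient-space count), the deterministic increment $r''=r_{\rm pre}+z$, the binomial $K$ over the $T'-M$ non-shared slots with the $\Psi$ rank increment, and the final factor $(2^N-2^{r'})/(2^N-1)$. Your remark on $(2^N-2^{N-1})/(2^N-1)$ is well founded, but adopting the nonzero-vector convention consistently would not reproduce the statement: the factors $2^{-(M-1)N}\Phi(M-1,N,N-1)$, $\Theta$ and $\Psi$ all count zero-inclusive uniform coding vectors, for which the innovative probability is exactly $1/2$, and the paper simply asserts the nonzero factor (as in Theorem~\ref{Lemma_Bob_Ack_time_pdf}), so the stated expression exceeds the true probability by the negligible factor $2^N/(2^N-1)$ and you should record this as a discrepancy in the statement rather than absorb it into your argument.
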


\begin{proof}The proof is provided in Appendix \ref{appendix:P_asy}.
\end{proof}

Since each transmission block is statistically identical and independent, the first asynchronization incidence follows a geometric process. Therefore, the expected time to the first asynchronization incidence is equivalent to the expected interval between two consecutive incidences.

\begin{corollary}\label{Theorem:T_asy}
The expected time to the first asynchronization incidence, equivalently the expected interval between two consecutive incidences, is
\begin{equation}
\begin{aligned}
\tau_{\text{asy}}=\dfrac{\tau_q}{\mathrm{P}_{\text{asy}}}\big(\dfrac{N+1.6}{\mathrm{P}_\text{B}}+\mathbb{E}[T_{\text{ack}}]\big)
\end{aligned}
\label{eq:T_asy}
\end{equation}
where $\tau_q$ denotes the packet transmission time and $\mathrm{P}_{\text{asy}}$ is described in Theorem  \ref{Lemma:P_async}.
\end{corollary}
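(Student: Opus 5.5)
The plan is to treat each block as a renewal cycle and apply Wald's identity. The first asynchronization incidence occurs at block $\beta$. Blocks are i.i.d. and asynchronization occurs independently in each block with probability $\mathrm{P}_{\rm asy}$ (Theorem~\ref{Lemma:P_async}), so $\beta$ is geometric with $\mathbb{E}[\beta]=1/\mathrm{P}_{\rm asy}$.

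The elapsed time up to and including block $\beta$ is $\tau_q\sum_{b=1}^{\beta}T'_b$, where $T'_b=T_b+T_{{\rm ack},b}$ is the number of slots spent on block $b$, as in \eqref{eq:T_prime}. The event $\{\beta\ge b\}$ depends only on blocks $1,\dots,b-1$, so $\beta$ is a stopping time for the i.i.d. sequence $(T'_b,\text{asy}_b)$. Wald's identity then gives
\[
\tau_{\rm asy}=\tau_q\,\mathbb{E}[\beta]\,\mathbb{E}[T']=\frac{\tau_q}{\mathrm{P}_{\rm asy}}\big(\mathbb{E}[T]+\mathbb{E}[T_{\rm ack}]\big).
\]
One subtlety needs checking. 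Within a single block, $T'$ and the asynchronization indicator are correlated, since longer blocks give Eve more packets. Wald's identity still holds because the stopping rule may depend on the current block's outcome; what is needed is only that $\mathbf{1}\{\beta\ge b\}$ is independent of $T'_b$, which holds. The same renewal argument shows the interval between consecutive incidences has the same mean, which gives the ``equivalently'' clause.

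The remaining step is $\mathbb{E}[T]$, the mean number of slots until Bob's binary RLNC matrix reaches rank $N$. I would not sum $T\,\text{f}_\text{B}(T)$ from Theorem~\ref{Lemma_Bob_Ack_time_pdf} directly. Instead I would decompose $T$ into received packets and erasures. Each slot is an independent Bernoulli($\mathrm{P}_\text{B}$) reception, so by Wald again $\mathbb{E}[T]=\mathbb{E}[M]/\mathrm{P}_\text{B}$, where $M$ is the number of received coded packets needed for full rank. When the current rank is $i$, a uniformly random vector in $\mathbb{F}_2^N$ raises the rank with probability $1-2^{i-N}$, so
\[
\mathbb{E}[M]=\sum_{i=0}^{N-1}\frac{1}{1-2^{i-N}}=N+\sum_{k=1}^{N}\frac{1}{2^k-1}.
\]

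The main obstacle is justifying the constant $1.6$. The tail $\sum_{k\ge1}1/(2^k-1)\approx1.6067$ is the Erd\H{o}s--Borwein constant. The finite sum is increasing in $N$ and already exceeds $1.6$ for moderate $N$ (e.g. $N\ge 6$ gives about $1.58$ to within $0.03$). So I would state $\mathbb{E}[M]\approx N+1.6$, an approximation that is tight for practical block sizes, and note that the expression is slightly conservative or exact up to $O(2^{-N})$. Substituting gives $\mathbb{E}[T]=(N+1.6)/\mathrm{P}_\text{B}$ and hence \eqref{eq:T_asy}.
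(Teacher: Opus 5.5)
Your proposal is correct and follows the route the paper takes implicitly. The paper only remarks that blocks are i.i.d., so the incidence block is geometric with mean $1/\mathrm{P}_{\rm asy}$, multiplies this by the mean block length $\mathbb{E}[T]+\mathbb{E}[T_{\rm ack}]$, and uses the standard binary-RLNC value $\mathbb{E}[T]\approx(N+1.6)/\mathrm{P}_\text{B}$ without derivation. Your Wald argument for the within-block correlation and your rank-increment derivation of $\mathbb{E}[M]=N+\sum_{k=1}^{N}(2^k-1)^{-1}$ supply what the paper leaves unstated.

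One small numerical slip: that partial sum is about $1.591$ at $N=6$ and first exceeds $1.6$ only at $N=8$. So for $N\ge 8$, $N+1.6$ slightly \emph{underestimates} $\mathbb{E}[M]$, by less than $0.007$, rather than being conservative. The formula is exact up to $O(2^{-N})$ only if $1.6$ is replaced by $\sum_{k\ge1}(2^k-1)^{-1}\approx 1.6067$.
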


Once an asynchronization incidence occurs, Eve must eventually resynchronize by identifying the dummy packets of a subsequent full-rank block. Since these packets are indistinguishable from the original packets, the required brute-force search gives rise to the following computational complexity.

\begin{corollary}\label{Theorem:Computational_power}
Let $\tau_\text{cmp}$
denote the expected computational time required to continuously maintain synchronization during one second of communication through repeated brute-force resynchronization. It is given by
{\begin{equation}
\begin{aligned}
\tau_{\mathrm{cmp}}=\binom{N}{D}\dfrac{ D{\mathrm{P}}_{\mathrm{B}}\mathrm{P}_{\mathrm{asy}}C_\text{E}^{-1}}{\tau_q\times\big(N+1.6+\mathrm{P}_{\mathrm{B}}{\mathbb{E}}\{T_{\text{ack}}\}\big)},
\label{eq:brute_force_computational_power}
\end{aligned}
\end{equation}}where $C_\text{E}$ denotes Eve's computational capacity, measured as the number of packet combinations evaluated per second.
\end{corollary}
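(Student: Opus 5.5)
The plan is to view continuous synchronization maintenance as a renewal process driven by asynchronization incidences. Each incidence costs Eve one brute-force search. The quantity $\tau_{\mathrm{cmp}}$ then equals the rate of incidences per second of communication times the expected search time per incidence.

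\textbf{Step 1: rate of incidences.} By Corollary~\ref{Theorem:T_asy}, incidences occur on average once every
\[
\tau_{\text{asy}}=\frac{\tau_q}{\mathrm{P}_{\text{asy}}}\Big(\frac{N+1.6}{\mathrm{P}_\text{B}}+\mathbb{E}[T_{\text{ack}}]\Big)
\]
seconds. The blocks are i.i.d. with mean block duration $\tau_q\big((N+1.6)/\mathrm{P}_\text{B}+\mathbb{E}[T_{\text{ack}}]\big)$. By the renewal-reward theorem, the long-run number of incidences per second of communication is therefore
\[
\frac{1}{\tau_{\text{asy}}}=\frac{\mathrm{P}_{\text{B}}\mathrm{P}_{\text{asy}}}{\tau_q\big(N+1.6+\mathrm{P}_{\text{B}}\mathbb{E}\{T_{\text{ack}}\}\big)}.
\]
This already reproduces the denominator and the factor $\mathrm{P}_\text{B}\mathrm{P}_{\rm asy}$ in \eqref{eq:brute_force_computational_power}.

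\textbf{Step 2: cost of one resynchronization.} After an incidence, Eve must resynchronize to keep up. She does this by taking the next full-rank block she decodes and identifying its dummy subset. I model the cost of this search directly from the setup of Section~III.

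Property~(iii) of $\mathcal{S}$ and the construction \eqref{eq:dummy_generator_entropy_1} make every candidate subset of size $D$ equally plausible. Eve must therefore enumerate the $\binom{N}{D}$ supports $\mathbf{s}$ of weight $D$. For each candidate she forms $\bigoplus_{i} d_i$ over the $D$ chosen packets, which requires $D$ packet operations. She then applies $\mathrm{L}^{-1}$ and checks the result against the next block, for example by verifying that the decoded state has weight $D$ and is consistent.

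In the worst case this amounts to $\binom{N}{D}D$ packet-combination evaluations. At capacity $C_\text{E}$ evaluations per second, one resynchronization costs $\binom{N}{D}D\,C_\text{E}^{-1}$ seconds. Multiplying the incidence rate from Step~1 by this per-incidence cost yields \eqref{eq:brute_force_computational_power}.

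\textbf{Main obstacle.} The delicate step is justifying the per-incidence cost $\binom{N}{D}D/C_\text{E}$. Three issues need care.
\begin{enumerate}[label=(\alph*)]
\item The exhaustive enumeration must be counted in full, with no $1/2$ average factor. I would argue this as a worst case, or equivalently note that every candidate $\mathbf{s}$ must be evaluated because the correct subset cannot be recognised until the chained check is run.
\item No subset can be discarded early. This follows from property~(ii): any proper subset reveals nothing about the state.
\item The search is paid only once per incidence, not once per block. This holds because after a successful resynchronization Eve stays synchronized, by the chaining argument of Section~III-C, until the next incidence.
\end{enumerate}

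I would also state the simplifying assumption that Eve reaches full rank in the block used for resynchronization. This assumption does not alter the leading-order rate, and the expected-value identity $\tau_{\rm cmp}=\mathbb{E}[\text{cost}]/\tau_{\rm asy}$ then follows from linearity of expectation over i.i.d. blocks.
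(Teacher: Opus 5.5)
Your proposal is correct and follows the route the paper implicitly takes (the paper states the corollary without a written proof): $\tau_{\mathrm{cmp}}$ is the per-incidence brute-force cost $\binom{N}{D}D\,C_\text{E}^{-1}$ multiplied by the incidence rate $1/\tau_{\mathrm{asy}}$ from Corollary~\ref{Theorem:T_asy}, with the factor $D$ read as the cost of combining $D$ packets per candidate. One caution: your alternative justification in (a), that every candidate must be evaluated, does not hold, because Eve can stop as soon as a candidate passes the check (a wrong XOR essentially never lands in the image of $\mathrm{L}$), so counting all $\binom{N}{D}$ candidates should be presented only as the worst-case convention the formula adopts.
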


Corollary~\ref{Theorem:Computational_power} shows that the computational cost of each resynchronization attempt grows combinatorially with the block parameters. In particular, increasing either $N$ or $D$ rapidly increases the computational burden imposed on Eve. In contrast, the channel conditions and ACK delay affect only the frequency of resynchronization attempts through $\text{P}_{\rm asy}$, rather than the computational cost of each individual attempt.

\section{Performance Metrics and Design Optimization}
The analytical framework enables optimization of block parameters, transmit power, and intentional interference to balance communication performance and computational security.

\subsection{Channel Model}
To formulate the optimization problems, we first introduce the physical-layer model used to evaluate the packet success probabilities of Bob and Eve. We consider a PtP Rayleigh fading channel in which Alice transmits with total power $P$. A fraction $\alpha P$ is allocated to the information signal, while the remaining $(1-\alpha)P$ is allocated to intentional interference. The resulting signal-to-interference-plus-noise ratios (SINRs) are given by
\begin{equation}
\begin{aligned}
&\gamma_{\rm B}
=
\frac{\alpha P |h_{\rm B}|^2}
{(1-\alpha)P |g_{\rm B}|^2+\sigma_{\rm B}^2}, 
\\&\gamma_{\rm E}
=
\frac{\alpha P |h_{\rm E}|^2}
{(1-\alpha)P |g_{\rm E}|^2+\sigma_{\rm E}^2},
\end{aligned}\label{eq:SINRs}
\end{equation}
respectively. Here, $h_{\rm B}\sim\mathcal{CN}(0,\Omega_{\rm B})$ and
$h_{\rm E}\sim\mathcal{CN}(0,\Omega_{\rm E})$ denote the
data-signal channels, 
$g_{\rm B}\sim\mathcal{CN}(0,\Omega_{{\rm B},{\rm AN}})$ and
$g_{\rm E}\sim\mathcal{CN}(0,\Omega_{{\rm E},{\rm AN}})$ denote the intentional-interference channels toward Bob and Eve, respectively, and $\sigma_\text{B}^2$ and $\sigma_\text{E}^2$ denote the corresponding thermal noise powers.

The intentional interference is transmitted from a separate co-located antenna, whose channel is assumed to be statistically independent of the data channel. Since the interference is unknown to both receivers, it is modeled as additional noise. Under Rayleigh fading and assuming capacity-achieving coding with sufficiently long packets, the packet success probabilities are approximated by the outage probabilities
\begin{equation}
\begin{aligned}
\text{P}_{\rm B}
&=
\mathcal{P}\!\left(
\gamma_{\rm B}>\nu(R_t)
\right)
=
\frac{
\exp\!\left(
-\dfrac{\nu(R_t)\sigma_{\rm B}^2}
{\alpha P\Omega_{\rm B}}
\right)
}
{
1+
\dfrac{\nu(R_t)(1-\alpha)\Omega_{{\rm B},{\rm AN}}}
{\alpha\Omega_{\rm B}}
},
\\[1ex]
\text{P}_{\rm E}
&=
\mathcal{P}\!\left(
\gamma_{\rm E}>\nu(R_t)
\right)
=
\frac{
\exp\!\left(
-\dfrac{\nu(R_t)\sigma_{\rm E}^2}
{\alpha P\Omega_{\rm E}}
\right)
}
{
1+
\dfrac{\nu(R_t)(1-\alpha)\Omega_{{\rm E},{\rm AN}}}
{\alpha\Omega_{\rm E}}
},
\end{aligned}
\label{eq:PtP_P_loss}
\end{equation}
   where $\nu(R_t)=2^{R_t}-1$, $R_t=R_c\log _2M_\text{mod}$ is the target rate,  $M_\text{mod}$ is the modulation order, and $R_c\in [0,1]$ is the code rate. 

Under the co-located antenna assumption, the receivers experience approximately the same large-scale channel gains, i.e., $\Omega_{{\rm B},{\rm AN}}\approx\Omega_{\rm B}$ and $\Omega_{{\rm E},{\rm AN}}\approx\Omega_{\rm E}$. So, \eqref{eq:PtP_P_loss} simplifies to
\begin{equation}
\begin{aligned}
\text{P}_{\rm B}
&
=
\alpha\exp\!\left(
-\dfrac{\nu(R_t)\sigma_{\rm B}^2}
{\alpha P\Omega_{\rm B}}
\right)\Big{/}\big(\alpha+
\nu(R_t)(1-\alpha)\big),
\\[1ex]
\text{P}_{\rm E}
&
=
\alpha\exp\!\left(
-\dfrac{\nu(R_t)\sigma_{\rm E}^2}
{\alpha P\Omega_{\rm E}}
\right)\Big{/}\big(\alpha+
\nu(R_t)(1-\alpha)\big),
\end{aligned}
\label{eq:PtP_P_succ}
\end{equation}implying that intentional interference serves as an enhancement by increasing the probability of asynchronization incidence, rather than acting as the primary security mechanism.

\subsection{Performance Metrics}

Using the analytical framework developed in Section IV, the following performance metrics are employed throughout the optimization problems. Using the PDF of $T$ derived in Theorem~\ref{Lemma_Bob_Ack_time_pdf}, the packet latency is defined as
\begin{equation}
    \ell=\tau_q\times{\mathbb{E}\{T\}},
\end{equation}
  which is equivalent to the expected time that the decoding matrix of Bob achieves full rank. Further, the throughput is defined as the average number of original packets successfully delivered per second, described as
\begin{equation}
    \eta=\dfrac{N-D}{\tau_q\times\mathbb{E}\{T'\}}=\dfrac{N-D}{\tau_q\times(\mathbb{E}\{T\}+\mathbb{E}\{T_{\text{ack}}\})},
\end{equation}
where the PDF of $T'$ is defined in \eqref{eq:T_prime_pdf}. Besides communication performance, the optimization also constrains the amount of uncertainty retained by Eve after successful decoding. Accordingly, the interpretation uncertainty is defined as
\begin{equation}
     \epsilon= \dfrac{D}{N}.
\end{equation}A minimum uncertainty requirement ensures that successfully decoded blocks remain insufficient for reliable interpretation without the synchronization state.

\subsection{Transmit Power Optimization}

Transmit power directly affects the packet success probabilities of both Bob and Eve, and therefore, the probability of eavesdropper asynchronization. Specifically, varying the transmit power changes $\mathrm{P}_{\rm B}$ and $\mathrm{P}_{\rm E}$, thereby affecting $\mathrm{P}_{\rm asy}$ and the expected time to the first asynchronization incidence, $\tau_{\rm asy}$. Since reducing $\tau_{\rm asy}$ increases the frequency with which Eve must perform computationally expensive resynchronization, it is selected as the optimization objective. We first consider the baseline system without intentional interference ($\alpha=1$) to isolate the effect of transmit-power optimization. Let
\[
\boldsymbol{\Upsilon}_1
=
(\alpha=1,N,D,\Omega_{\rm B},\Omega_{\rm E},
\sigma_{\rm B},\sigma_{\rm E},
R_t,\text{f}_{\rm ACK}(t))
\]
collect the fixed system parameters. The transmit power is then optimized by solving
\begin{subequations}\label{eq:P1}
\begin{align}
\mathbb{P}_1:\quad
\underset{P}{\text{minimize}}\quad
& \tau_{\mathrm{asy}}(P,\boldsymbol{\Upsilon}_1)
\\
\text{s.t.}\quad
& \eta(P,\boldsymbol{\Upsilon}_1) \geq \eta_{\min},
\\
& \ell(P,\boldsymbol{\Upsilon}_1) \leq \ell_{\max},
\\
& \tau_{\mathrm{cmp}}(P,\boldsymbol{\Upsilon}_1) \geq \tau_{\min},
\\
& 0<P \leq P_{\max}.
\end{align}
\end{subequations}
The objective function is given in \eqref{eq:T_asy}, while constraints \eqref{eq:P1}b--\eqref{eq:P1}d enforce the throughput, latency, and computational-security requirements, respectively.

Problem $\mathbb{P}_1$ provides a useful performance benchmark when Alice has knowledge of Eve's channel gain, or at least a conservative upper bound on it. Although such information is generally unavailable for passive eavesdroppers, the formulation establishes the best performance achievable through transmit-power adaptation alone. Nevertheless, when Eve's channel is substantially stronger than Bob's ($\Omega_{\rm E}\gg\Omega_{\rm B}$), transmit-power optimization cannot fundamentally compensate for Eve's channel advantage. Therefore, although it provides useful performance improvements, it may still yield unacceptably large values of $\tau_{\rm asy}$. This limitation motivates the intentional-interference optimization developed in the next subsection, where Eve's packet success probability is reduced independently of her channel strength.

\begin{figure*}
    \centering
    \vspace{0cm}\includegraphics[width=1\linewidth]{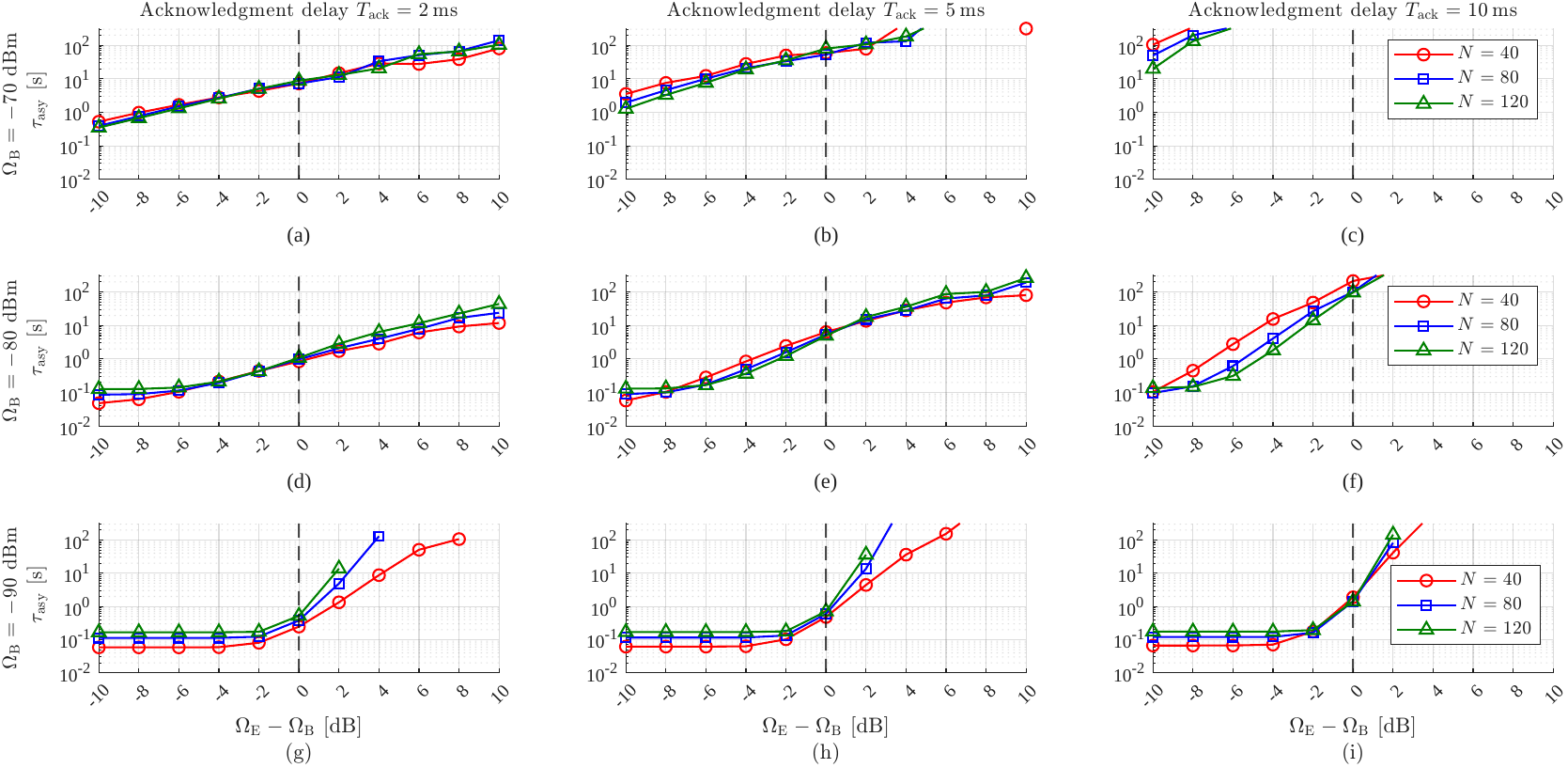}
    \vspace{-0.8cm}
    \caption{Mean time to the first asynchronization incidence, equivalently the mean interval between two consecutive asynchronization incidences, $\tau_{\mathrm{asy}}$, for transmit power $P=1$\,Watt versus the channel gain difference between Eve and Bob, $\Omega_{\mathrm{E}}-\Omega_{\mathrm{B}}$, for Bob's mean channel gains $\Omega_{\mathrm{B}}=-70$, $-80$, and $-90$\,dBm, ACK delays $T_{\rm ack}=2$, $5$, and $10$\,ms, and block sizes $N=40$, $80$, and $120$.}
    \label{fig:async_time}
\end{figure*}

\subsection{Intentional-Interference Optimization}

Although transmit-power optimization improves the probability of eavesdropper asynchronization, it cannot fundamentally compensate for an arbitrarily strong eavesdropper. In contrast, intentional interference introduces a controllable impairment that limits Eve's packet success probability even when her received signal power is very large. Hence, it reduces the expected time to the first asynchronization incidence, $\tau_{\rm asy}$. The optimization variable $\alpha$ determines the power split between the information signal and the intentional interference. Accordingly, $\alpha$ is optimized by solving
\begin{subequations}\label{eq:P2_artificial_noise}
\begin{align}
\mathbb{P}_2:\quad
\underset{\alpha}{\text{minimize}}\quad
& \tau_{\mathrm{asy}}(\alpha,\boldsymbol{\Upsilon}_2)
\\
\text{s.t.}\quad
& \eta(\alpha,\boldsymbol{\Upsilon}_2) \geq \eta_{\min},
\\
& \ell(\alpha,\boldsymbol{\Upsilon}_2) \leq \ell_{\max},
\\
& \tau_{\mathrm{cmp}}(\alpha,\boldsymbol{\Upsilon}_2) \geq \tau_{\min},
\\
& 0<\alpha\leq1,
\end{align}
\end{subequations}
where
\[
\boldsymbol{\Upsilon}_2
=
(P=P_{\max},N,D,\Omega_{\rm B},
\Omega_{\rm E}=\infty,
\sigma_{\rm B},\sigma_{\rm E},
R_t,f_{\rm ACK}(t))
\]
collects the fixed system parameters.

To obtain a design that remains effective regardless of Eve's channel strength, we consider the worst-case limit $\Omega_{\rm E}\rightarrow\infty$. Under this assumption, \eqref{eq:PtP_P_succ} reduces to
\begin{equation}
\begin{aligned}
\text{P}_{\rm E}
=
\frac{\alpha}
{\alpha+\nu(R_t)(1-\alpha)},
\end{aligned}
\label{eq:PtP_P_succ_Eve_upperbound}
\end{equation}
which depends solely on the power-allocation factor $\alpha$. Hence, unlike transmit-power optimization, intentional interference fundamentally bounds Eve's packet success probability even under arbitrarily strong channel conditions. Since increasing the transmit power no longer affects Eve's packet success probability, the optimal strategy is to set $P=P_{\max}$ and optimize only $\alpha$. The resulting optimum is denoted by $\alpha^*$, and the corresponding objective value by $\tau_{\rm asy}^{\rm upp}$, which represents an upper bound on the expected time to the first asynchronization incidence because $\Omega_{\rm E}=\infty$ corresponds to the most favorable propagation condition for Eve.




\begin{figure*}
    \centering
    \vspace{0cm}\includegraphics[width=1\linewidth]{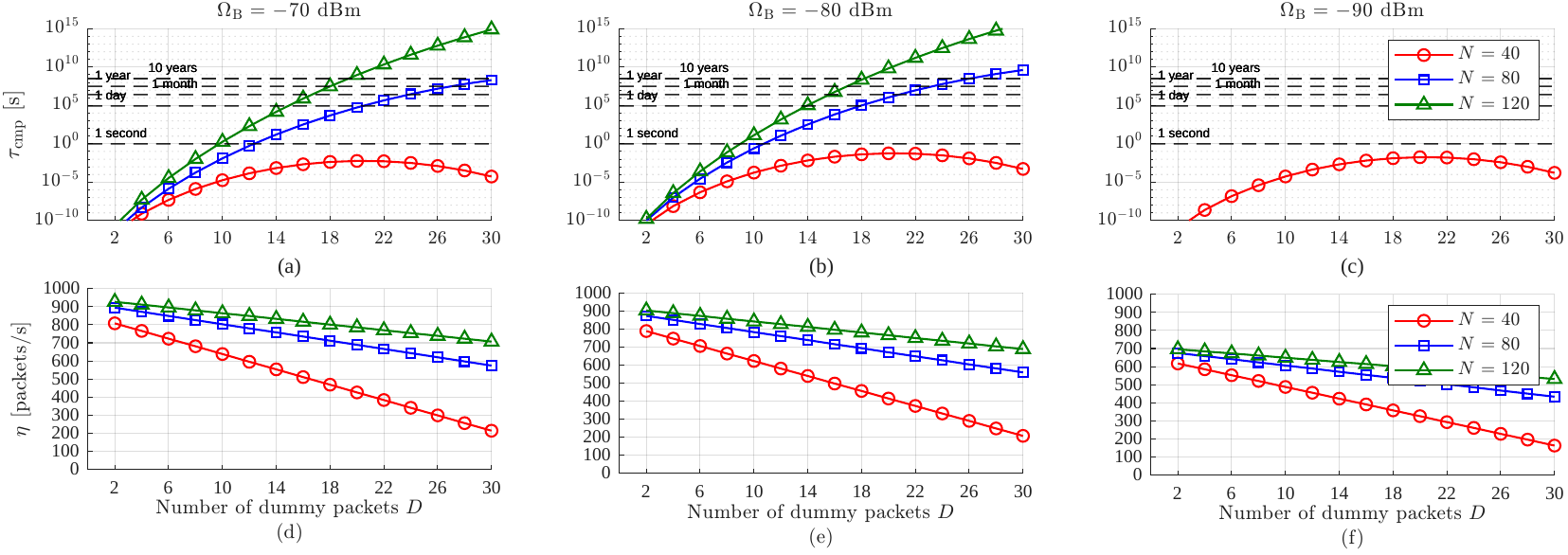}
    \vspace{-0.8cm}
    \caption{Impact of the number of dummy packets $D$ on computational security and throughput. The top row shows the expected computational time required by Eve to maintain synchronization through repeated brute-force resynchronization, $\tau_{\rm cmp}$ for one second of transmission with the proposed state-chained scheme, and the bottom row shows the resulting throughput $\eta$.}
    \vspace{-0.2cm}
    \label{fig:complexity_throughput}
\end{figure*}

\section{Simulation results}\label{Section:simulation_results}

This section validates the analytical framework and evaluates how temporal dependency behaves under increasingly adverse eavesdropping conditions. For the simulations, we consider a PtP link in the presence of a passive eavesdropper. The total available transmit power at Alice is $P_{\rm max}=1$\,Watt and the target rate is set as $R_t=2$\,bpcu. Each packet has a payload of $n=1$\,kilobyte and transmission time $\tau_q=1$\,ms, while the receiver noise powers are $\sigma_{\rm B}^2=\sigma_{\rm E}^2=-100$\,dBm. Unless otherwise stated, the results are presented as a function of the channel gain difference $\Omega_{\rm E}-\Omega_{\rm B}$, which provides a more informative comparison than the absolute channel gain of Eve.

Fig.~\ref{fig:async_time} evaluates the baseline temporal-dependency security performance before optimization, demonstrating how quickly Eve becomes asynchronized under different channel conditions. Specifically, it shows the mean time to the first asynchronization incidence, $\tau_{\rm asy}$, obtained from Monte Carlo simulations with $10^4$ independent realizations, as a function of the channel gain difference $\Omega_{\rm E}-\Omega_{\rm B}$ for $N=40$, $80$, and $120$. The region $\Omega_{\rm E}>\Omega_{\rm B}$ corresponds to a stronger eavesdropper, where the secrecy capacity becomes zero (or negative in theory). As expected, $\tau_{\rm asy}$ increases with Eve's channel gain because the probability of asynchronization, $\mathrm{P}_{\rm asy}$, decreases. Likewise, increasing the ACK delay from $2$ to $10$\,ms substantially degrades performance for all block sizes, since the longer observation window allows Eve to collect more innovative packets before transmission terminates. Nevertheless, for $T_{\rm ack}=2$\,ms, $\tau_{\rm asy}$ remains below approximately $10$--$30$\,s even when Eve's channel is $10$\,dB stronger than Bob's. Furthermore, larger block sizes are more resilient to long ACK delays because Eve requires more innovative packets to achieve full rank, whereas smaller block sizes become preferable under poor channel conditions. These results motivate the formulation of the transmit-power optimization problem $\mathbb{P}_1$
 and the intentional-interference optimization problem $\mathbb{P}_2$
 to reduce the expected time to the first asynchronization incidence.

Fig.~\ref{fig:complexity_throughput} evaluates the computational security provided by the proposed framework by showing the expected computational time required for Eve to maintain synchronization over one second of communication through repeated brute-force resynchronization. To represent an extremely powerful adversary, Eve is assumed to employ 100 NVIDIA B200 GPUs operating in parallel, providing a total computational capacity of $C_{\rm E}=10^{12}$ packet combinations per second, while her average channel gain is assumed to be 10\,dB higher than Bob's. The computational burden is highly sensitive to the block parameters $N$ and $D$. For $D\le8$, the required computation time remains below one second for all block sizes, whereas it increases combinatorially with $D$. Although the computation time remains below one second for $N=40$, it exceeds ten years at approximately $D=30$ for $N=80$ and $D=20$ for $N=120$. Furthermore, for $N=120$, increasing $D$ from $14$ to $18$ increases the expected resynchronization time from approximately one day to several years. These results demonstrate that, once an asynchronization incidence occurs, maintaining synchronization becomes computationally infeasible even for an exceptionally powerful adversary, without relying on cryptographic assumptions.

The bottom row of Fig.~\ref{fig:complexity_throughput} shows the corresponding throughput. As expected, increasing the number of dummy packets reduces throughput because a larger fraction of each block is devoted to synchronization-state information. Conversely, for a fixed $D$, larger block sizes improve throughput by reducing the relative overhead introduced by the dummy packets. This trade-off motivates optimizing $N$ and $D$ rather than simply maximizing computational complexity.

\begin{figure*}
    \centering
    \vspace{0cm}\includegraphics[width=1\linewidth]{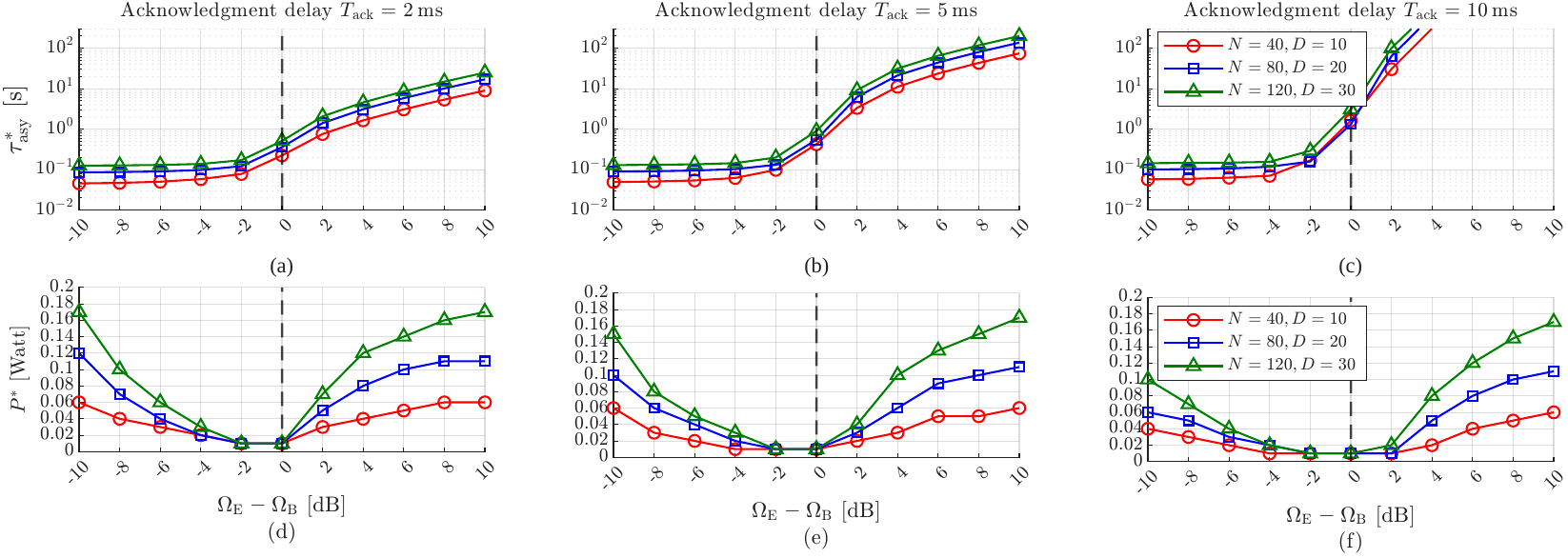}
    \vspace{-0.8cm}
    \caption{Optimal solution of $\mathbb{P}_1$. Optimal asynchronization time $\tau
_{\rm asy}^{*}$ (top row) and optimal transmit power $P^{*}$ (bottom row) versus the channel gain difference $\Omega_{\rm{E}}-\Omega_{\rm{B}}$, by setting $\Omega_{\rm{B}}=-70$\,dBm and $P_{\rm max}=1$\,Watt.}
\vspace{-0.2cm}
    \label{fig:OP1}
\end{figure*}

Fig.~\ref{fig:OP1} demonstrates the benefit of transmit-power optimization in reducing the expected time to the first asynchronization incidence. The figure presents the optimal solution of $\mathbb{P}_1$ with $P_{\max}=1$\,Watt and a minimum throughput requirement of $\eta_{\min}=500$\,packets/s. The top row shows the optimal expected time to the first asynchronization incidence, $\tau_{\rm asy}^{*}$, while the bottom row shows the corresponding optimal transmit power, $P^{*}$, for $\Omega_{\rm B}=-70$\,dBm and ACK delays of $T_{\rm ack}=2$, $5$, and $10$\,ms. As expected, $\tau_{\rm asy}^{*}$ increases as Eve's channel becomes stronger. Nevertheless, it remains below approximately $10$--$20$\,s for $T_{\rm ack}=2$\,ms even when Eve's channel is $10$\,dB stronger than Bob's, and below $1$\,s for $T_{\rm ack}\le5$\,ms when Bob and Eve experience similar channel conditions. The optimal transmit power is approximately $0.01$\,Watt when the channel conditions are comparable, since a lower transmit power slightly delays Bob's decoding while increasing the likelihood that Eve remains rank deficient, thereby reducing $\tau_{\rm asy}$. As either Bob's or Eve's channel becomes significantly stronger, the optimal transmit power increases to satisfy the throughput constraint while minimizing $\tau_{\rm asy}$. Compared with the baseline results in Fig.~\ref{fig:async_time}, transmit-power optimization substantially reduces $\tau_{\rm asy}$, particularly when Bob and Eve experience similar channel conditions. However, its performance remains fundamentally limited against arbitrarily strong eavesdroppers, motivating the intentional-interference optimization developed in $\mathbb{P}_2$.

Fig.~\ref{fig:OP2} presents the optimal solution of $\mathbb{P}_2$, demonstrating that intentional interference fundamentally removes the dependence of communication security on channel advantage. The top row shows the resulting upper bound on the expected time to the first asynchronization incidence, $\tau_{\rm asy}^{\rm upp}$, while the bottom row shows the corresponding optimal intentional-interference coefficient, $\alpha^{*}$, as a function of the minimum throughput requirement, $\eta_{\min}$. The results assume $\Omega_{\rm B}=-70$\,dBm, $\Omega_{\rm E}=\infty$, $P=P_{\max}=1$\,Watt, and $D=N/4$. As expected, increasing the throughput requirement increases $\alpha^{*}$, allocating more power to the information signal and less to intentional interference. Therefore, $\tau_{\rm asy}^{\rm upp}$ increases with $\eta_{\min}$. Nevertheless, the proposed optimization maintains sub-second performance for all considered throughput requirements when $T_{\rm ack}=2$\,ms, and for throughput requirements below approximately $600$\,packets/s when $T_{\rm ack}=5$\,ms. These results demonstrate that sub-second eavesdropper asynchronization remains achievable even under the worst-case assumption of $\Omega_{\rm E}=\infty$. However, for $T_{\rm ack}=10$\,ms, the longer observation window outweighs the benefit of intentional interference, making packet interleaving necessary to further shorten Eve's observation interval.
\begin{figure*}
    \centering
    \vspace{0cm}\includegraphics[width=1\linewidth]{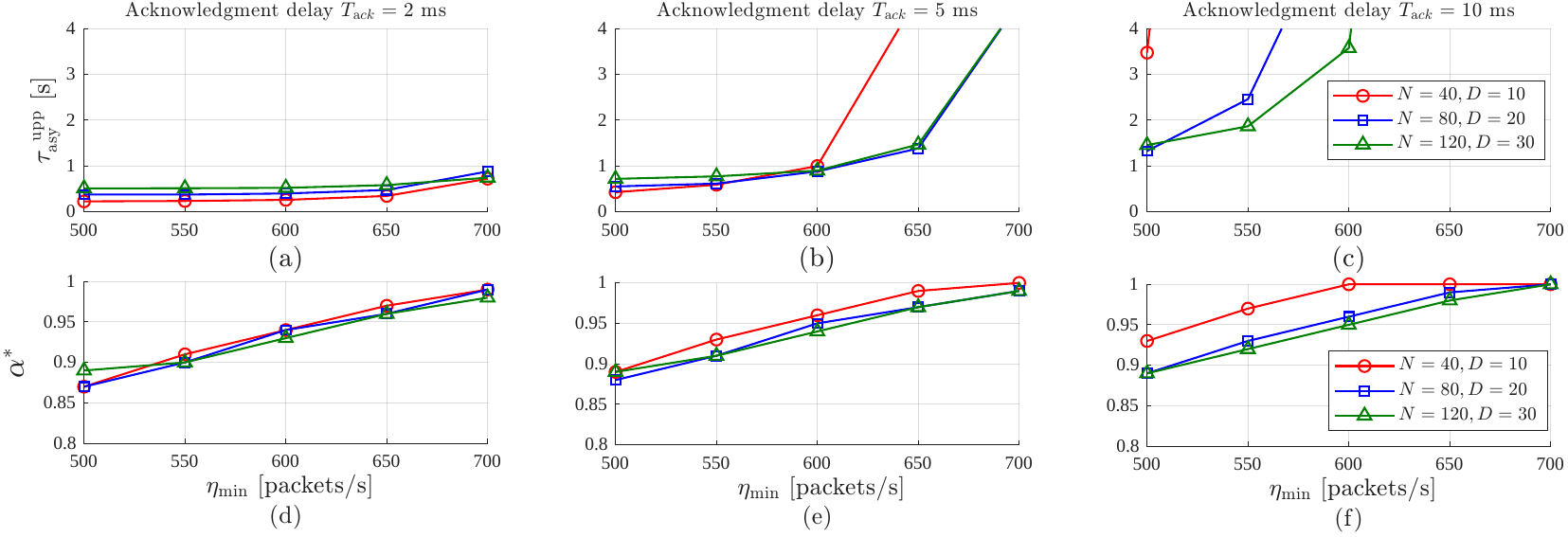}
    \vspace{-0.8cm}
    \caption{Optimal solution of $\mathbb{P}_2$; Upper bound to the mean time of the first asynchronization incidence $\tau
_{\rm asy}^{\rm upp}$ (top row) and respective optimal intentional interference coefficient $\alpha^{*}$ (bottom row) versus the minimum required throughput $\eta_{\rm min}$ for three ACK delay values $T_{\rm ack}=2,5$ and $10$\,ms, by setting $\Omega_{\rm{B}}=-70$\,dBm, $\Omega_{\rm{E}}=\infty$, $D=N/4$, and $P=P_{\rm max}=1$\,Watt.} 
 \vspace{-0.2cm}
    \label{fig:OP2}
   
\end{figure*}

\section{Conclusion}
This paper introduced a new communication-security paradigm in which temporal dependency serves as the underlying security resource, allowing the successful interpretation of future communications to depend on correctly interpreting previous transmissions. As one realization, we developed a state-chained RLNC framework together with analytical models for eavesdropper asynchronization, resynchronization complexity, and the optimization of transmit power and intentional interference. Numerical results showed that transmit-power optimization substantially reduces the expected time to the first eavesdropper asynchronization incidence, while intentional-interference optimization achieves sub-second expected asynchronization even under a worst-case adversarial model with no CSI, complete protocol and cryptographic knowledge, and an arbitrarily stronger eavesdropper. Meanwhile, resynchronization imposes computational costs of several years on the eavesdropper. These results demonstrate that communication security can be achieved without relying on secret-key confidentiality or a communication advantage, making temporal-dependency security a promising paradigm for future infrastructure-less and highly dynamic wireless networks.

\appendices
\section{Proof of Theorem \ref{Lemma_Bob_Ack_time_pdf}} \label{Appendix_A}
The probability that Bob becomes full rank  exactly at packet time $t=T$ is described as the joint probability of the rank be $N-1$ at the previous time slot $T-1$ and a rank increment at time slot $T$ as follows
\begin{equation}
\begin{aligned}
&\mathcal{P}\Big(\text{r}({\bf{B}}_T)=N,\text{r}({\bf{B}}_{T-1})=N-1\Big)  \\& 
=\mathcal{P}\Big(\text{r}({\bf{B}}_{T-1})=N-1\Big)\mathcal{P}\Big(\text{r}({\bf{B}}_T)=N|\text{r}({\bf{B}}_{T-1})=N-1\Big)  
\label{eq:Bob_Ack_time_pdf}
\end{aligned}
\end{equation}
where the second term in \eqref{eq:Bob_Ack_time_pdf} correspond to the probability that Bob successfully receive the  packet at time slot $t=T$ and that a random vector of size $N$ be out of the row span of a random full rank matrix in $\mathbb{F}^{N-1\times N}$, and derived as 
\begin{equation}
\begin{aligned}
\mathcal{P}\Big(\text{r}({\bf{B}}_T)=N|\text{r}({\bf{B}}_{T-1})=N-1\Big)=\dfrac{2^{N-1}}{2^N-1}\mathrm{P}_\text{B}.
\label{eq:Bob_Ack_fullrank_conditional}
\end{aligned}
\end{equation}
Moreover, the first term in \eqref{eq:Bob_Ack_time_pdf} can be expanded based on the number of rows in ${\bf{B}}_{T-1}$ as
\begin{equation}
\begin{aligned}
&\mathcal{P}\big(\text{r}({\bf{B}}_{T-1})=N-1\big)\\&
=\sum_{M=N}^{T}\mathcal{P}\big(\text{r}({{\bf{B}}_{T-1}})=N-1|\text{m}({{\bf{B}}_{T-1}})=M-1)\\&~~\times
\mathcal{P}\Big(\text{m}({{\bf{B}}_{T-1}})=M-1\Big)
\end{aligned}\label{eq:rank_bob_N-1}
\end{equation}
where the second term correspond to the probability of $M-1$ successful packet reception by Bob within $T-1$ packet transmission, described as
\begin{equation}
\begin{aligned}
\mathcal{P}\Big(\text{m}({{\bf{B}}_{T-1}})=M-1\Big) =\binom{T-1}{M-1} \mathrm{P}_\text{B}^{M-1} (1-\mathrm{P}_\text{B})^{T-M}
\label{eq:combination_M-1_from_T-1}
\end{aligned}
\end{equation}
Also, the first term in \eqref{eq:rank_bob_N-1} correspond to the the probability of a random matrix in $\mathbb{F}^{M-1\times N}$ with $M\geq N$ has the rank of $N-1$, and derived as
\begin{equation}
\begin{aligned}
&\mathcal{P}\big(\text{r}({{\bf{B}}_{T-1}})=N-1|\text{m}({{\bf{B}}_{T-1}})=M-1)\\&
=\dfrac{\prod_{i=0}^{N-2} (2^{M-1}-2^i)\prod_{i=0}^{N-2}(2^N-2^i)}{2^{(M-1)N}\prod_{i=0}^{N-2}(2^{N-1}-2^i)}
\label{eq:P_rank_r_in_matrix_m_by_N_appendix_A}
\end{aligned}
\end{equation}
whose proof is in Appendix \ref{Appendix:rank_r_with_m_rows}. So, substituting $m$ with $M-1$ and $r$ with $N-1$ in \eqref{eq:P_rank_r_in_matrix_m_by_N},  \eqref{eq:P_rank_r_in_matrix_m_by_N_appendix_A} is derived. Finally, by replacing \eqref{eq:combination_M-1_from_T-1} and \eqref{eq:P_rank_r_in_matrix_m_by_N_appendix_A} into  \eqref{eq:rank_bob_N-1} and then replacing \eqref{eq:Bob_Ack_fullrank_conditional} and \eqref{eq:rank_bob_N-1} into \eqref{eq:Bob_Ack_time_pdf}, Theorem \ref{Lemma_Bob_Ack_time_pdf} is concluded.

\section{Probability of a random matrix in $\mathbb{F}_2^{m\times N}$ be rank $r$}\label{Appendix:rank_r_with_m_rows} 

To determine the probability that the rank of a random matrix ${\bf{B}}\in \mathbb{F}_2^{m*N}$ be $r$, we determine the the number of possible matrices ${\bf{B}}\in \mathbb{F}_2^{m*N}$ with rank $r$ and divide it by all the possible matrices ${\bf{B}}\in \mathbb{F}_2^{m*N}$. To this end, we firstly consider that every rank $r$ matrix can be decomposed as 
\begin{equation}
\begin{aligned}
\bf{B}=\bf{CD}
\label{eq:matrix_factorization}
\end{aligned}
\end{equation}
where ${\bf{C}}\in \mathbb{F}_2^{m*r}$ has full column rank $r$ and ${\bf{D}}\in \mathbb{F}_2^{r*N}$ has full row rank $r$. To prove the existence of such decomposition, we consider ${\bf{D}}$ as the stack of basis vectors of the row space of  ${\bf{B}}$. Since r(${\bf{B}})=r$, the number of the basis vectors of ${\bf{D}}$ is also $r$ and they are linearly independent, and therefore, r(${\bf{D}})=r$. Since each row of $\bf{B}$ is a linear combination of the rows of $\bf{D}$, there exists a matrix $\bf{C}$ such that ${\bf{B}}=\bf{C}\bf{D}$. Then, since r$({\bf{B}})=$ r$({\bf{D}})=r$, we must have r$({\bf{C}})=r$.

To count the number of possible ${\bf{B}}\in \mathbb{F}_2^{m*N}$ with  r(${\bf{B}}$)=$r$, we first count the number full column rank matrices  ${\bf{C}}\in\mathbb{F}_2^{m\times r}$. The $r$ columns of ${\bf{C}}$ must
be linearly independent. Thus, the first column can be any nonzero vector
in $\mathbb{F}_2^{m}$.  The second column must lie outside the span of the first, and so
on. Hence, the number of such matrices is
\begin{equation}
  |\{ {\bf{C}}\in \mathbb{F}_2^{m\times r} : {\textbf{r}}({\bf{C}})=r \}| = \prod_{i=0}^{r-1}(2^{m}-2^i).  \label{eq:number_of_full_rank_C}
\end{equation}
 By the same argument,
the number of  matrices ${\bf{C}}\in \mathbb{F}_2^{r\times N}$ 
 with rank $r$  is
\begin{equation}
     |\{ {\bf{D}}\in \mathbb{F}_2^{r\times N} : {\text{r}}({\bf{D}})=r \}|=\prod_{i=0}^{r-1}(2^{N}-2^i).
\label{eq:number_of_full_rank_D}
\end{equation}
However, the decomposition \eqref{eq:matrix_factorization} is not unique. This is because for any invertible matrix ${\bf{V}}\in\mathbb{F}_2^{r\times r}$, we can rewrite \eqref{eq:matrix_factorization} as  
\begin{equation}
\begin{aligned}
\bf{B}=\bf{CD}=(\bf{CV})(\bf{V}^{-1}{D})
\label{eq:matrix_factorization_duplications}
\end{aligned}
\end{equation}
implying that different pairs  of $\bf{C}$ and $\bf{D}$ in \eqref{eq:matrix_factorization}  can generate the same matrix  $\bf{B}$. Therefore, the number of pairs of $\bf{C}$ and $\bf{D}$ that generates the same matrix $\bf{B}$ is equivalent to the number of all possible full rank matrices in $\mathbb{F}_2^{r\times r}$, given by
\begin{equation}
     |\{ {\bf{V}}\in \mathbb{F}_2^{r\times r} : {\text{r}}({\bf{D}})=r \}|=\prod_{i=0}^{r-1}(2^{r}-2^i).
\label{eq:number_of_full_rank_Q}
\end{equation}
Finally, from \eqref{eq:number_of_full_rank_C}, \eqref{eq:number_of_full_rank_D}, and \eqref{eq:number_of_full_rank_Q}, the probability that a random
matrix ${\bf{B}}\in \mathbb{F}_2^{m\times N}$ be rank $r$ is described as
\begin{equation}
\begin{aligned}
&\mathcal{P}\big(\text{r}({\bf{B}})=r|\text{m}({{\bf{B}}})=m)\\&
=2^{-mN}\dfrac{\prod_{i=0}^{r-1} (2^{m}-2^i)\prod_{i=0}^{r-1}(2^N-2^i)}{\prod_{i=0}^{r-1}(2^{r}-2^i)}
\label{eq:P_rank_r_in_matrix_m_by_N}
\end{aligned}
\end{equation}
where the multiplier refers to the total number of possible matrices ${\bf{B}}\in \mathbb{F}_2^{m\times N}$.

\section{Helper function to determine the probability of asynchronization incidence}\label{appendix:Helper function}

Lets ${\bf{A}}_{[K]}\in \mathbb{F}^{K\times N}$ denotes the sub-matrix formed by a random selection of $K$ rows of a matrix ${\bf{A}}\in \mathbb{F}^{m\times N}$ with rank $R$. We define the probability that ${\bf{A}}_{[K]}$  has rank $r''$ as
\begin{equation}
\begin{aligned}
&\mathcal{P}\!\left(
{\rm{ r}}({\bf{A}}_{[K]})=r''
\mid 
{\rm r}({\bf A})=R, {\bf{A}}_{[K]} \subseteq_K{\bf{A}} \in \mathbb{F}_2^{m\times N}
\right)\\&~~~~~~~~~~~~~~~~~~=:\Theta(m,N,R,K,r'').
\label{eq:Theta_def_H}
\end{aligned}
\end{equation}
To derive \eqref{eq:Theta_def_H}, we count the number of
 matrices ${\bf A}\in\mathbb{F}_2^{m\times N}$ with rank $R$ whose selected $K$ rows
have rank $r{''}$ and divide by the total number of matrices ${\bf A}\in\mathbb{F}_2^{m\times N}$  with rank $R$. From Appendix \ref{Appendix:rank_r_with_m_rows}, the total number of matrices in $\mathbb{F}_2^{m\times N}$  with
rank $R$ is given by
\begin{equation}
|\{ {\bf{A}}\in \mathbb{F}_2^{m\times N} : {\textbf{r}}({\bf{A}})=R \}|=:\Phi(m,N,R),
\label{eq:total_rank_R_H}
\end{equation}
where $\Phi(m,n,r)$ is defined in \eqref{eq:theorem_Phi_def_H}.

Next, consider the event that the selected $K$ rows have rank
exactly $r''$. The number of possible choices for the $K$ rows
is
\begin{equation}
|\{ {\bf{A}}_{[K]}\in \mathbb{F}_2^{K\times N} : {\text{r}}({\bf{A}}_{[K]})=r'' \}|=\Phi(K,N,r'').
\label{eq:selected_rows_H}
\end{equation}
Since $\text{r}({\bf{A}}_{[K]})=r''$, the rows of ${\bf{A}}_{[K]}$ span an $r''$-dimensional subspace $\mathcal{U}\subseteq \mathbb{F}_2^N$  with the dimension $\dim({\cal U})=r''$. 
The remaining (non-selected) $m-K$ rows in ${\bf{A}}\in \mathbb{F}^{m\times N}$ must increase the rank from $r''$ to
$R$. Let ${\cal W}$ be a complementary subspace of ${\cal U}$
such that
\begin{equation}
  {\cal U} \oplus {\cal W}=\mathbb{F}_2^N,\qquad \dim({\cal W})=N-r''.
\end{equation}
Then, any remaining  row ${\bf y}\in\mathbb{F}_2^N$ in ${\bf{A}}\in \mathbb{F}^{m\times N}$ can be uniquely
decomposed as
\begin{equation}
{\bf y}={\bf u}\oplus {\bf w}, \qquad {\bf u}\in{\cal U}\subseteq \mathbb{F}_2^N,
\qquad
{\bf w}\in{\cal W}\subseteq \mathbb{F}_2^N.
\end{equation}

 Since ${\cal U}$ is an $r''$-dimensional subspace, there are exactly  $2^{r''}$ selections for $\bf{u}$. Therefore, for each of the
$m-K$ remaining rows, $\bf{u}$ can be
chosen in $2^{r''}$  ways. Then, the total number
of possible choices for the components inside ${\cal U}$ is
\begin{equation}
\begin{aligned}
&|\{ {\bf{U}}\in \mathbb{F}_2^{(m-K)\times N} : {\text{row}}({\bf{U}})\subseteq \mathcal{U},\dim({\cal U})=r'' \}|\\&
~~~~~~~~~~~~~=2^{(m-K){r''}},
\label{eq:inside_U_H}
\end{aligned}
\end{equation}
where each selection of $\bf{u}$ forms a row in the matrix $U$. Choosing a basis for ${\cal W}$ establishes a one-to-one
correspondence between any  vector $\bf{w}$ in ${\cal W}$ and their
coordinate representations in $\mathbb{F}_2^{N-{r''}}$.
Therefore, the components of the remaining $m-K$ rows that
lie in ${\cal W}$ can be represented by a matrix ${\bf{W}}\in \mathbb{F}^{(m-K)\times (N-r'')}$. In order to increase the
overall rank from $r''$ to $R$, this matrix must have rank
$R-r''$. Consequently, the number of such possibilities is
given by
\begin{equation}
\begin{aligned}
&|\{ {\bf{W}}\in \mathbb{F}_2^{(m-K)\times (N-r'')} :  {\text{r}}({\bf{W}})=R-r'' \}|\\&~~~~~~~~~~~~~~~~~=\Phi(m-K,N-r'',R-r'').
\end{aligned}
\end{equation}
Then, the total number of rank-$R$ matrices in $\mathbb{F}^{m\times N}$ whose selected $K$ rows have rank $r''$ is
\begin{equation}
\begin{aligned}
&|\{ \! {\bf{A}} \subseteq \mathbb{F}_2^{m\times N}
: 
{\rm r}({\bf A})=R, {\bf{A}}_K \subseteq_K {\bf{A}}, {\rm{ r}}({\bf{A}}_K)=r'' \}|
\\&
=\Phi(K,N,r'')\,
2^{(m-K)r''}\,
\Phi(m-K,N-r'',R-r'').
\label{eq:numerator_H}
\end{aligned}
\end{equation}Dividing by the total number of rank-$R$ matrices, the expression in \eqref{eq:Theta_def_H} is derived as
\begin{equation}
\begin{aligned}
&\Theta(m,N,R,K,r'')
\\&=
\frac{
\Phi(K,N,r'')\,
2^{(m-K)r''}\,
\Phi(m-K,N-r'',R-r'')
}{
\Phi(m,N,R)
}.
\label{eq:Theta_final_H}
\end{aligned}
\end{equation}which is used in appendix \ref{appendix:P_asy} for determining the probability of asynchronization incidence at Eve.

\section{Probability of asynchronization incidnece}\label{appendix:P_asy}
Asynchronization occurs when the state vector does not belong to the row space of Eve's decoding matrix at the end of a transmission block, i.e., \eqref{eq:async_condition}. The probability of this event is obtained by successively conditioning on Bob's decoding event, the packets shared between Bob and Eve, the packets received only by Eve, and the final rank of Eve's decoding matrix. Accordingly,
\begin{equation}
\begin{aligned}
&\text{P}_{\rm asy}=\mathcal{P}\!\left(
{\bf s}\notin {\rm row}({\bf E}_{T'})
\right)
\\&
=\sum_{T'=N}^{\infty}
\sum_{T=N}^{T'}
\text{f}_{\rm ACK}(T'-T)
\sum_{M=N}^{T}
\text{f}_{\text{B},M}(T,M)
\text{P}_{{\rm asy}|T',T,M},
\end{aligned}
\label{eq:Pasy_exact_G}
\end{equation}
where $\text{P}_{{\rm asy}|T',T,M}$ denotes the conditional asynchronization probability given that Bob first achieves full rank at time slot $T$ after successfully receiving $M$ packets, while Alice terminates transmission at time slot $T'$. The term $\mathrm{f}_{\mathrm{B},M}(T,M)$ is the probability of this event and is given by
\begin{equation}
\begin{aligned}
\text{f}_{\text{B},M}&(T,M)
=
\mathcal{P}\!\left(
{\rm r}({\bf B}_{T-1})=N-1
\mid
\text{m}({\bf B}_{T-1})=M-1
\right)
\\
& \times
\binom{T-1}{M-1}
\text{P}_\text{B}^{M-1}(1-\text{P}_\text{B})^{T-M}
\text{P}_\text{B}
\frac{2^N-2^{N-1}}{2^N-1}.
\end{aligned}
\label{eq:fBM_G}
\end{equation}
Here, the first factor represents the probability that Bob's first $M-1$ successfully received coding vectors have rank $N-1$. Since these vectors are independently generated, we have
\begin{equation}
\begin{aligned}
&\mathcal{P}\!\left(
{\rm r}({\bf B}_{T-1})=N-1
\mid
\text{m}({\bf B}_{T-1})=M-1
\right)
\\&~~~~~~~~~~~~~~~~~=
2^{-(M-1)N}
\Phi(M-1,N,N-1),
\label{eq:rank_B_prev_G}
\end{aligned}
\end{equation}
where  $\Phi(.,.,.)$ is described in \eqref{eq:theorem_Phi_def_H}. 

Conditioned on $(T',T, M)$, the 
asynchronization probability is
\begin{equation}
\begin{aligned}
&\text{P}_{\rm asy}(T',T,M)=\sum_{z=0}^{1}
\mathcal{P}(Z=z)
\\&
~~~~\times \sum_{k_{\rm pre}=0}^{M-1}
\mathcal{P}({\bf{s}} \notin \text{row}({\bf{E}}_{T'}) \mid k_{\text{pre}},z)\mathcal{P}(K_{\text{pre}}=k_{\text{pre}})
\end{aligned}
\label{eq:Pasy_(T',T,M)}
\end{equation}
where $K_{\rm pre}$ denotes the number of packets  that are  received by
Eve among the first $M-1$ packets received by Bob. Thus,
\begin{equation}
\mathcal{P}(K_{\rm pre}=k_{\rm pre})
=
\binom{M-1}{{k_{{\rm pre}}}}
\text{P}_\text{E}^{k_{{\rm pre}}}(1-\text{P}_\text{E})^{M-1-{k_{{\rm pre}}}}.
\label{eq:Kprev_G}
\end{equation}
Also,  $Z\in\{0,1\}$ indicates whether Eve also receives Bob's
final innovative packet at time slot $T$. Hence,
\begin{equation}
\mathcal{P}(Z=z)
=
\text{P}_\text{E}^z(1-\text{P}_\text{E})^{1-z}.
\label{eq:z_G}
\end{equation}

Conditioning further on the rank of Eve's shared decoding matrix, denoted by $R_{\rm pre}$, gives
\begin{equation}
\begin{aligned}
& \mathcal{P}({\bf{s}} \notin \text{row}({\bf{E}}_{T'})\mid k_{\text{pre}},z)= \\&\sum_{r_{\rm pre}=0}^{\substack{\min(k_{\rm pre}\\, N-1)}}
\mathcal{P}({\bf{s}} \notin \text{row}({\bf{E}}_{T'}) \mid r_{\rm pre}, k_{\text{pre}},z)\mathcal{P}(R_{\rm pre}=r_{\rm pre} \mid k_{\rm pre}).
\end{aligned}\label{eq:r_prev|K_pre}
\end{equation}
The conditional distribution of $R_{\rm pre}$ is obtained from Appendix~\ref{appendix:Helper function} as
\begin{equation}
\begin{aligned}
&\mathcal{P}(R_{\rm pre}=r_{\rm pre} \mid K_{\rm pre}=k_{\rm pre})
\\&~~~~~~~~~~~~=
\Theta(M-1,N,N-1,k_{\rm pre},r_{\rm pre}).
\end{aligned}\label{eq:rprev_theta_G}
\end{equation}
Whenever Eve receives Bob's final innovative packet, it is linearly independent of the previous shared vectors. Therefore, the shared rank at time slot $T$ is
\begin{equation}
R''=R_{\rm pre}+Z.
\label{eq:rsh_G}
\end{equation}

Let $K$ denote the number of packets received only by Eve after Bob reaches full rank. Conditioning on $K$ yields
\begin{equation}
\begin{aligned}
& \mathcal{P}({\bf{s}} \notin \text{row}({\bf{E}}_{T'}) \mid r_{\text{pre}},k_{\text{pre}},z) \\&=\sum_{k=0}^{T'-M}
\mathcal{P}({\bf{s}} \notin \text{row}({\bf{E}}_{T'}) \mid k,r_{\text{pre}},k_{\text{pre}},z)\mathcal{P}(K=k).
\end{aligned}\label{eq:r_prev|k,K_pre}
\end{equation}
The number of packets not received by Bob by time slot $T$ is $T-M$. Also, 
from time slot $T+1$ to $T'$, Alice continues transmitting
linear combinations of the same block, that are all independently generated.  Therefore, the probability that Eve successfully receive $K$ number of these packets 
is
\begin{equation}
\mathcal{P}(K=k)
=
\binom{T'-M}{k}
\text{P}_\text{E}^k(1-\text{P}_\text{E})^{T'-M-k}.
\label{eq:K_extra_G}
\end{equation}
To derive the first term in \eqref{eq:r_prev|k,K_pre}, we span the probability based on the rank of the decoding matrix of Eve by time $T'$ as
\begin{equation}
\begin{aligned}
&\mathcal{P}({\bf{s}} \notin \text{row}({\bf{E}}_{T'}) \mid K,r_{\text{pre}},K_{\text{pre}},z)=\\&\sum_{r'=r''}^{N-1}
\mathcal{P}({\bf{s}} \notin \text{row}({\bf{E}}_{T'}) \mid r',K,r_{\text{pre}},K_{\text{pre}},z)\\&~~~~~\times\mathcal{P}(\text{r}({\bf{E}}_{T'})=r'\mid r'',K)
\end{aligned}\label{eq:r_prev|r',k,K_pre}
\end{equation}

Conditioned on  successfully receiving  $K$  packets by Eve that is not received by Bob, and on the existing shared rank
$r''$, the probability that Eve's final rank becomes
$r'$ is 
\begin{equation}
\mathcal{P}(\text{r}({\bf{E}}_{T'})=r'\mid r'',K)=\Psi(K,N-r'',r'-r''),
\label{eq:Psi_G}
\end{equation}
where $\Psi(K,N-r'',r'-r'')$ denotes the
probability that $K$ random vectors increase the rank by
$r'-r''$ in the quotient space of dimension
$N-r''$ and is defined as 
\begin{equation}
\Psi(m,n,r)=2^{-mn}\Phi(m,n,r),
\label{eq:Psi_def}
\end{equation}
where $\Phi(.,.,.)$ is defined in \eqref{eq:theorem_Phi_def_H}. To derive the first term in \eqref{eq:r_prev|r',k,K_pre}, we have

\begin{equation}
\begin{aligned}
&\mathcal{P}({\bf{s}} \notin \text{row}({\bf{E}}_{T'}) \mid r',K,r_{\text{pre}},K_{\text{pre}},z)\\&=\Pr\!\left(
{\bf s}\notin {\rm row}({\bf E}_{T'})
\mid
{\rm r}({\bf E}_{T'})=r'
\right)
=
\frac{2^N-2^{r'}}{2^N-1}.
\end{aligned}
\label{eq:s_not_row_G}
\end{equation}
where the first equality follows from the fact that
$\Pr({\textbf{s}}\notin\mathrm{row}({\bf E}_{T'}))$
depends only on the rank of ${\bf E}_{T'}$. The second equality
follows since a rank-$r'$ subspace contains $2^{r'}$ vectors
among the $2^N-1$ nonzero vectors in $\mathbb{F}_2^N$.

Combining
\eqref{eq:Psi_G}--\eqref{eq:s_not_row_G}
yields the conditional probability
$\text{P}_{\rm asy|T',T,M}$ in
\eqref{eq:theorem_Pasy_cond_exact_G}. Substituting this result
into \eqref{eq:Pasy_exact_G} gives the exact
asynchronization probability $\text{P}_{\rm asy}$.
Finally, for a deterministic ACK delay,
$f_{\rm ACK}(T'-T)=1$ when
$T'=T+T_{\rm ACK}$ and zero otherwise, reducing
\eqref{eq:Pasy_exact_G} to
\eqref{eq:theorem_Pasy_exact_G} and completing the proof.



\ifCLASSOPTIONcaptionsoff
  \newpage
\fi

\bibliographystyle{IEEEtran}

\bibliography{references.bib}

\end{document}